\newcommand{\ket}[1]{| #1 \rangle} 
\newcommand{\bra}[1]{\langle #1 |} 
\newcommand{\bb}{\boldsymbol}
\def \d {\mathrm{d}}
\def \e {\mathrm{e}}
\def \i {\mathrm{i}}
\newcounter{parentalgorithm}
\newtheorem{theorem}{Theorem}[section]
\newtheorem{lemma}{Lemma}[section]
\newtheorem{definition}{Definition}[section]
\theoremstyle{remark}
\newtheorem{remark}{\bf Remark}[section]
\numberwithin{equation}{section}
\begin{document}

\title{ Quantum preconditioning method for { finite difference discretizations of the Poisson equation} via Schr\"odingerization}

\author[1]{Shi Jin\thanks{shijin-m@sjtu.edu.cn}}
\author[1, 2]{Nana Liu\thanks{nana.liu@quantumlah.org} }
\author[3]{Chuwen Ma\thanks{cwma@math.ecnu.edu.cn}}
\author[4,5,6]{Yue Yu\thanks{terenceyuyue@xtu.edu.cn}}
\affil[1]{School of Mathematical Sciences, Institute of Natural Sciences, MOE-LSC, Shanghai Jiao Tong University, Shanghai, 200240, China}
\affil[2]{Global College, Shanghai Jiao Tong University, Shanghai 200240, China}
\affil[3]{School of Mathematical Sciences, Key Laboratory of MEA, Ministry of Education, Shanghai Key Laboratory of PMMP, East China Normal University, Shanghai 200241, China,
	}
\affil[4]{School of Mathematics and Computational Science, Xiangtan University, Xiangtan, Hunan 411105, China}
\affil[5]{Hunan Research Center of the Basic Discipline Fundamental Algorithmic Theory and Novel Computational Methods, Xiangtan, Hunan 411105, China}
\affil[6]{National Center for Applied Mathematics in Hunan, Xiangtan, Hunan 411105, China}

\maketitle

\begin{abstract}
We present a quantum preconditioning framework for solving linear systems { arising from a finite difference discretization of the Poisson equation}. It is based on the combination of the Schr\"odingerization technique \cite{JLY22b,JLYPRL24} and the BPX multilevel preconditioner in order to achieve near-optimal complexity. The Schr\"odingerization technique transforms linear partial and ordinary differential equations into Schr\"odinger-type systems with unitary evolution in one higher dimension,  making them suitable for quantum simulation.
{ A key contribution is a structure-aware construction of the block-encoding for the symmetrically preconditioned  matrix $A_S = S^\top A S$, where $A$ is the stiffness matrix and $S$ encodes the BPX preconditioner in factored form. By establishing a novel commuting identity, we avoid the unfavorable normalization scaling that would otherwise arise from naive multiplication of block-encodings. This yields an exact block-encoding of $A_S$ with normalization $\mathcal{O}(d^2(L+1))$, where $d$ is the spatial dimension and $L$ is the number of levels. Combined with the Schr\"odingerization-based Hamiltonian simulation, the overall quantum algorithm achieves a query complexity of $\mathcal{O}\big(\mathrm{poly}(d)\varepsilon^{-1} \mathrm{polylog}(\varepsilon^{-1}) \big)$ for estimating linear functionals of the solution to a given tolerance $\varepsilon$.}
\end{abstract}

\textbf{Keywords}: Linear systems, Schr\"odingerization, BPX preconditioner, Block-encoding

\textbf{MSC codes}: 68Q12, 65F10, 65F08

 \tableofcontents

\section{Introduction}
Quantum computing has emerged as a promising paradigm to address the inherent
limitations of classical computing, which, after decades of rapid advancement,
is nearing its physical limits \cite{Fre69,DBKFPTZ22}. One of the most
significant applications of this quantum advantage lies in the efficient
resolution of linear systems
\cite{Ambainis12,CKS17,HHL09,LT20,SSO19,WZP18,XSEBY21,TAWL21}. By enabling
faster and more efficient solutions to linear systems, quantum computing has the
potential to revolutionize numerical simulations, with profound implications for
fields such as physics and engineering.

In this paper, we focus on an efficient quantum solver for linear systems of the form
\begin{equation} \label{eq:linear_algebra}
A \bm{x} = \bm{b},
\end{equation}
{ which arise from the finite difference discretization of the Poisson equation, with $A$ being symmetric positive definite.}
Here $A\in \mathbb{R}^{N\times N}$ is a sparse matrix and $\bm{b}\in
\mathbb{R}^N$ is a vector. Without loss of generality, we assume that the
dimension satisfies $N = 2^n$.

Quantum computing offers a promising approach to addressing the computational
challenges associated with matrix dimensionality, requiring only a logarithmic
number of qubits with respect to the matrix dimension to store both the matrix
and the solution vector. Over the past decade, several quantum algorithms have
been developed to enhance the performance of generic quantum linear system
problem (QLSP) solvers \cite{CKS17,HHL09,LT20,SSO19,gilyen2019quantum}.
Using the standard amplitude amplification technique \cite{BHM02}, the query
complexity of the original Harrow-Hassidim-Lloyd (HHL) quantum linear systems
algorithm \cite{HHL09} scales as ${ \mathcal{O}}(\kappa_A^2\varepsilon^{-1} )$,
where $\varepsilon$ is the target accuracy,
$\kappa_A = \|A\|\|A^{-1}\|$ is the condition number.  { Unless otherwise stated, the term ``query complexity'' refers specifically to the number of queries to the matrix input models (i.e., the block-encoding oracles for the relevant matrices), and does not include the additional sampling overhead required for readout.}
Subsequently, several advanced techniques, such as the linear combination of
unitaries (LCU) \cite{CKS17}, quantum singular value transformation (QSVT)
\cite{gilyen2019quantum}, and quantum signal processing (QSP)
\cite{Low2019qubitization},
have been proposed to improve the dependence on the condition number and reduce
the query complexity to  ${\mathcal{O}}(\kappa_A^2
\text{polylog}(\kappa_A\varepsilon^{-1}) )$. In \cite{CKS17}, variable-time
amplitude amplification (VTAA) was combined with a low-precision phase
estimation approach, improving the complexity of the LCU method to
${ \mathcal{O}}(\kappa_A \text{polylog}(\kappa_A\varepsilon^{-1}))$, which is nearly
optimal with respect to both $\kappa_A$ and $\varepsilon$.
An alternative approach is the randomized method (RM) \cite{SSO19}, which, when
combined with the fast eigenpath traversal method \cite{BKS09}, achieves a query
complexity that is also nearly optimal at $\mathcal{O}(\kappa_A
\text{polylog}(\kappa_A\varepsilon^{-1}))$.
However, both VTAA and RM rely on highly intricate processes that pose
significant challenges for practical implementation.

Despite substantial advancements in quantum computing, the computational
complexity of solving quantum linear systems remains strongly influenced by the
condition number. Indeed, the numerical approximations to linear partial differential equations often result in ill-conditioned linear systems. For example, when solving the Poisson equation, the condition number is proportional to the square of the number of grid points. To reduce the computational resources for both classical and quantum algorithms, it is important to develop preconditioners to reduce the condition number.   Addressing large condition numbers through preconditioning for quantum algorithms has
been explored in prior works, including \cite{CJS13, SX18, TAWL21, BNWA23}.

Among the notable strategies for solving the quantum linear problem is
Schr\"odingerization \cite{JLY22b,JLYPRL24}, which transforms any linear
partial differential equations (PDEs) or ordinary differential equations (ODEs)
into a higher-dimensional Schr\"odinger-type equation.
In \cite{JL22}, the Schr\"odingerization is applied to linear systems based on the Jacobi and Power Iterative methods.

Inspired by the ideas in \cite{JL22,JLY22b,JLYPRL24}, this paper extends  Schr\"odingerization to all convergent linear stationary iterative algorithms,
described by the general form:
\begin{equation}\label{eq:sta ite}
	\bm{x}^{\text{new}} = \bm{x}^{\text{old}} + B(\bm{b} - A\bm{x}^{\text{old}}),
\end{equation}
where $B$ is referred to as the iterator, which can also serve as a preconditioner. The convergence of such iterative methods corresponds to the steady state of the associated continuous system.
This steady state can subsequently be obtained  by  quantum  {\it Hamiltonian simulation} within the Schr\"odingerization framework.
The quantum Jacobi iteration proposed in \cite{JL22} is a special case, where the iterator operator is defined as the inverse of the diagonal matrix of $A$.

To prepare the solution state $\ket{\bm{x}}$ within accuracy $\varepsilon$ with constant success probability, the complexity estimate in our framework depends on the block-encoding normalization factor $\alpha_{BA}$ and the reciprocal lower spectral bound $1/\lambda_{\min}(BA)$ of the preconditioned operator $BA$. { This shows that, in the quantum setting, the effectiveness of a preconditioner is determined not only by the spectral improvement of the system, but also by whether the preconditioned operator admits an efficient block-encoding with a favorable normalization factor.
For the Poisson equation, the BPX multilevel preconditioner provides uniform spectral bounds for the preconditioned operator $BA$ in the classical theory. However, this alone does not guarantee an efficient quantum implementation, since the block-encoding cost may still be large.} To make the preconditioner genuinely effective for our quantum algorithm, we exploit the factorization $B=SS^\top$ with $S=[S_0,S_1,\cdots,S_L]$ and work with the symmetrized matrix $A_S=S^\top A S$. { This strategy was first adopted in \cite{DP24}, where the BPX preconditioner was used to transform the linear system arising from finite element discretizations into a well-conditioned form more amenable to quantum computation.}


{
A naive approach would block-encode the preconditioning factor $S$ and the matrix  $A$ separately and then multiply them to obtain $A_S$. However, this leads to a block-encoding normalization factor of order $\|S\|^2\|A\|$, which is essentially $\kappa(A)$ since $S\approx A^{-1/2}$. To avoid this unfavorable dependence, similar to the idea in \cite{DP24}, we construct a block-encoding of $A_S$ directly rather than through a product of separate encodings.
The key observation is based on the factorization
$ A=C_L^\top C_L$, with $C_L = h^{(d-2)/2}D_L$,
where $h$ is the mesh size on the finest level and $D_L$ is the forward discrete gradient operator. Thus, $C_L$ is the weighted discrete gradient associated with the discrete energy. Similarly, $D_\ell$ denotes the corresponding forward discrete gradient operator on level $\ell$.
Since $A_S=S^\top A S=(C_LS)^\top(C_LS)$, and $S\approx A^{-1/2}$, the operator $C_LS$ can be viewed as a balanced discrete gradient whose spectral norm is mesh-uniform up to constants. To make this structure explicit, we prove the commuting relation
$ C_LS_\ell = T_{\ell\to L}D_\ell $
in Theorem~\ref{thm:CS=TD},
where $T_{\ell\to L}$ is a transfer operator from the coarse level $\ell$ to the finest level $L$ for the discrete gradient. 
This representation rewrites $C_LS_\ell$ as a combination of a coarse-level discrete gradient operator and its corresponding transfer operator.
}

{ Our approach to constructing the commuting identity differs from that in \cite{DP24}. In their work, the preconditioning operation $C_L S_\ell$ is transformed into a preconditioner in the discontinuous Galerkin space. By choosing an orthogonal basis for the discontinuous Galerkin space, they obtain $T_{\ell \to L}$ with orthogonal columns, thereby transferring the entire block-encoding constant of $C_L S_\ell$ to $D_\ell$. Extending this approach to the finite difference setting~--~by interpreting nodal values as finite element degrees of freedom~--~is possible but somewhat complicated.
In our treatment, we note that $S_\ell$ is a prolongation from level $\ell$ to level $L$, and $C_L$ corresponds to differentiation at level $L$. The linearity of the prolongation allows us to first consider differentiation at level $\ell$ and then apply the prolongation from level $\ell$ to level $L$. We show that the resulting $T_{\ell \to L}$ also has orthogonal columns  (up to a constant factor), which similarly enables the full transfer of the block-encoding constant of $C_L S_\ell$ to $D_\ell$.}

{
In developing our method, we highlight two main improvements over the BPX-based quantum realization in \cite{DP24}.}

\begin{enumerate}
  \item { \textbf{Simpler commuting identity construction.}}
     The construction of the commuting identity is simpler, and its block-encoding implementation is also more straightforward. In \cite{DP24}, the realization is relatively complicated due to the transition between continuous finite element and discontinuous Galerkin basis functions, as well as the associated ordering issues. Our construction, based directly on the action  of $C_L S_\ell$, is more natural and avoids such intricate transfer operations and ordering complications.

  \item { \textbf{Structure-aware block-encoding in a finite-difference setting.}}
  Because we work on nested dyadic Cartesian grids, the discrete operators in our scheme
have explicit tensor-product forms with fixed local stencils. This allows us to construct
the required block-encodings directly from a few simple one-dimensional building blocks,
and then lift them to multiple dimensions by tensor-product rules. As a result, the
implementation is more transparent and avoids the elementwise assembly, basis-transfer,
and ordering complications that appear in more general finite-element constructions.

\end{enumerate}

	The structure of the paper is as follows:
    In Section \ref{sec:SchrPre}, we apply Schr\"odingerization to the ODE system based on classical linear iterative methods to derive the Hamiltonian system for quantum computing. In Section \ref{sec:BPX}, we briefly review the BPX preconditioner for a finite difference discretization of the Poisson equation on nested dyadic grids, and provide explicit forms of the stiffness matrix and the preconditioner. Section \ref{sec:BE} addresses the block-encoding of the symmetrically preconditioned  matrix $A_S = S^\top A S$, where we establish the key commuting identity and present the explicit block-encoding of $A_S$.  In Section \ref{sec:implementation}, we present the implementation of the Schr\"odingerization-based Hamiltonian simulation for the preconditioned system, along with the computation of a linear quantity and its complexity. Finally, conclusions are presented in Section \ref{sec:con}.

We introduce some notations used throughout the paper.
The relations $f_1 \lesssim g_1$ and $f_2 \gtrsim g_2$ mean that $f_1 \leq C_1 g_1$ and $f_2 \geq C_2 g_2$, respectively, where the constants $C_1$ and $C_2$ are independent of the discretization parameters. Scalar-valued quantities are denoted by standard symbols, while vector-valued quantities are written in boldface. We adopt a 0-based indexing, i.e., $j \in {0,1,\cdots,N-1}$ (or equivalently $j \in [N]$). We use sans-serif letters such as $\mathsf{a},\mathsf{f}$, and $\mathsf{b}$ to denote quantum registers. The identity matrix and the null matrix are denoted by $I$ and $O$, respectively; their dimensions are usually clear from the context, and in case of ambiguity, we write $I_n$ or $I_{N\times N}$ for the $N=2^n$-dimensional identity matrix.

\section{Schr\"odingerization for symmetrically preconditioned systems} \label{sec:SchrPre}
Building on the ideas from \cite{JL22, JLY22b, JLYPRL24}, we first reformulate the convergent stationary iterative algorithm as a linear ODE system, where the solution to the algebraic problem corresponds to the steady state of the ODE. This reformulation enables the application of Schr\"odingerization to construct a Hamiltonian system for quantum computing.

\subsection{A review of the iteration for the preconditioned linear system problems}
\label{sec:itera and pre}
A linear stationary iterative method can be expressed in the following general form:
\begin{algorithm}[H]
\caption{For any matrix $B\in \mathbb{R}^{N\times N}$, $\bm{x}^{\text{new}} = ITER(\bm{x}^{\text{old}})$}\label{alg:SIM}
		\begin{algorithmic}
			\State 1. Form residual: $\bm{r} = \bm{b} - A \bm{x}$.
			\State 2. Solve error equation: $\hat{\bm{e}} = B \bm{r}$.
			\State 3. Correct iteration: $\bm{x}^{\text{new}} = \bm{x}^{\text{old}}+\hat{\bm{e}}$.
		\end{algorithmic}
\end{algorithm}
Thus, the new iteration is obtained by computing \eqref{eq:sta ite}.
Here $B$ is called the {\it iterator}.
Without loss of generality, we assume that $B$ is symmetric.
Typical examples include the Richardson iteration, where $B = \omega I$ with $\omega>0$ as a constant, and the Jacobi iteration, where $B = D^{-1}$ with $D$ denoting the diagonal of $A$.
It is noticed that each iteration only depends on the previous approximate solution $\bm{x}^{\text{old}}$, and the iterative method performs the same operations in each iteration. This characteristic is the reason it is referred to as a stationary (or explicit) iterative algorithm.

We denote $\sigma(A)$ the set of eigenvalues of $A$, and define the spectral radius $\rho(A):=\sup\{|\lambda|:\lambda\in \sigma(A)\}$.
For the stationary algorithm, we have a well-known convergence result.
\begin{lemma}\cite{XJ92}
	The stationary iterative algorithm \ref{alg:SIM} converges for any initial guess if and only if the spectral radius $\rho(I - BA)<1$.
\end{lemma}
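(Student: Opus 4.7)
The plan is to reduce convergence of the iteration to the asymptotic vanishing of powers of the iteration matrix $M := I - BA$, and then apply the standard spectral characterization of such matrix sequences.

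First, I would introduce the error vector $\bm{e}^k := \bm{x}^k - \bm{x}^\ast$, where $\bm{x}^\ast$ denotes the exact solution of $A\bm{x} = \bm{b}$. Since $\bm{x}^\ast = \bm{x}^\ast + B(\bm{b} - A\bm{x}^\ast)$ is a fixed point of the iteration, subtracting this identity from the update rule \eqref{eq:sta ite} gives $\bm{e}^{k+1} = (I - BA)\bm{e}^k$, and inductively $\bm{e}^k = M^k \bm{e}^0$. Convergence for every initial guess is therefore equivalent to $M^k \bm{y} \to \bm{0}$ for every $\bm{y} \in \mathbb{R}^N$, i.e., $M^k \to O$ as $k \to \infty$.

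It then remains to establish the purely algebraic fact that $M^k \to O$ if and only if $\rho(M) < 1$. For the \emph{necessity}, I would argue by contrapositive: if $\rho(M) \ge 1$, pick an eigenpair $(\lambda, \bm{v})$ with $|\lambda| \ge 1$ and $\bm{v} \ne \bm{0}$, and take $\bm{e}^0 = \bm{v}$; then $M^k \bm{v} = \lambda^k \bm{v}$ does not converge to $\bm{0}$, so the iteration fails to converge from the initial guess $\bm{x}^0 = \bm{x}^\ast + \bm{v}$. For the \emph{sufficiency}, I would invoke the standard fact that for any $\epsilon > 0$ there exists a submultiplicative matrix norm $\|\cdot\|_\epsilon$ (constructed via the Jordan decomposition with a rescaling of the generalized eigenbasis) satisfying $\|M\|_\epsilon \le \rho(M) + \epsilon$; choosing $\epsilon < 1 - \rho(M)$ gives $\|M^k\|_\epsilon \le \|M\|_\epsilon^k \to 0$, hence $M^k \to O$ in any norm by norm equivalence.

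The only step requiring any real care is the construction of the $\epsilon$-norm bounding $\|M\|_\epsilon$ by $\rho(M) + \epsilon$; this is the main technical obstacle, though it is entirely classical and can be quoted from any standard matrix analysis reference. The rest of the argument is a straightforward chain of equivalences, so I would keep the exposition compact and simply cite the spectral radius characterization rather than reproduce its proof.
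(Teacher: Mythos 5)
Your proof is correct and is exactly the standard argument the paper implicitly relies on: the paper states this lemma without proof, presenting it as a well-known convergence result, and your reduction to $\bm{e}^k = (I-BA)^k\bm{e}^0$ followed by the spectral characterization of $M^k\to O$ is the canonical way to establish it. The only cosmetic point is in the necessity direction: an eigenvalue with $|\lambda|\ge 1$ may be complex, so to produce a \emph{real} non-convergent initial guess you should take the real or imaginary part of the eigenvector (or simply observe that $\|M^k\|\ge\rho(M)^k\ge 1$ already precludes $M^k\to O$, and hence $M^k\bm{y}\not\to\bm{0}$ for some real basis vector $\bm{y}$); otherwise nothing is missing.
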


It is noted that the core of the iterative algorithm is the operator $B$ which captures the essential information of $A^{-1}$.  The approximate inverse $B$, can be used as a {\it preconditioner}.
Instead of computing \eqref{eq:linear_algebra}, one computes
\begin{equation}\label{preconditioningLSP}
	B A \bm{x} = B \bm{b}.
\end{equation}
A good preconditioner, improves the convergence of the iterative method sufficiently and is relatively inexpensive to compute.
However, it is noted that the linear iterator $B$ may not be convergent at all, whereas $B$ can always be a preconditioner. For example, the Jacobi method is not convergent for all symmetric and positive definite, but the diagonal part for the subspace equation can always be used as a preconditioner in the multigrid and multilevel methods, which is often known as the diagonal preconditioner. To get a convergence iteration, one can replace $B$ by $\omega B$ for $\omega \in (0, 2/\rho(BA))$.
\subsection{Steady-state solution of linear ODEs}

According to the stationary iterative algorithm \ref{alg:SIM}, 	we can define an ODE system
\begin{equation}\label{eq:ODE}
	\frac{{\rm d} \bm{u}(t)}{{\rm d} t}  =
	-B A \bm{u}(t) +B \bm{b},\quad
	\bm{u} (0) = \bm{u}_0,
\end{equation}
where $\bm{u}_0$ is an initial guess.
When both $A$ and $B$ are symmetric positive definite, we can introduce the eigenvalues of $BA$ as follows. Let the eigenvector of $BA$ be $\bm{w}_j$, with the corresponding eigenvalue $ \lambda_j $, i.e.,
		\[
		BA \bm{w}_j = \lambda_j \bm{w}_j \quad \text{or} \quad A \bm{w}_j = \lambda_j B^{-1} \bm{w}_j, \quad j = 1, \cdots, N.
		\]
		The problem on the right-hand side is called the generalized eigenvalue problem, and it has a complete orthogonality system with respect to $ B^{-1} $.
        Additionally, let $W=[\bm{w}_1,\cdots,\bm{w}_N]$
        denote the matrix of eigenvectors used in the diagonalization of $BA$.

       The relation between the stationary iterative algorithm and the steady-state solution to \eqref{eq:ODE} is described below.
\begin{lemma}\label{lem:rel stationary}
	If the stationary iterative algorithm \ref{alg:SIM} converges, which implies $\lambda(BA) \in (0,2)$, then the ODE system $\eqref{eq:ODE}$ has a steady-state solution, denoted by $\bm{u}_{\infty} =\lim\limits_{t\to+ \infty}\bm{u}(t)$, satisfying $\bm{x} = \bm{u}_{\infty}$ and
	\[
	\|\bm{u}(t)-\bm{x}\| \le  \kappa(W)   \e^{-\lambda_{\min}(BA) t} \|\bm{u}(0) - \bm{x}\|,
	\]
	where $\kappa(W)$ is the condition number of the transition matrix $W$ for the diagonalization of $BA$.
\end{lemma}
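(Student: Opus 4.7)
The plan is to verify that $\bm{x}$ is indeed the unique equilibrium of the ODE and then obtain the exponential decay estimate by diagonalizing $BA$.

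First I would identify the steady state. If $\bm{u}(t) \to \bm{u}_\infty$ as $t\to\infty$, then $\frac{d\bm{u}}{dt} \to 0$, so the right-hand side of \eqref{eq:ODE} must vanish at the limit, giving $BA\bm{u}_\infty = B\bm{b}$. Since convergence of the iterative algorithm together with the positive-definiteness hypotheses makes both $B$ and $A$ invertible (the eigenvalues of $BA$ lie in $(0,2)$, so $BA$ is nonsingular, and symmetry plus positivity of $B,A$ individually suffices), we may cancel $B$ and conclude $A\bm{u}_\infty = \bm{b}$, hence $\bm{u}_\infty = \bm{x}$.

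Next I would turn to the error dynamics. Set $\bm{e}(t) = \bm{u}(t) - \bm{x}$. Using $A\bm{x} = \bm{b}$ and subtracting from \eqref{eq:ODE}, one sees that $\bm{e}$ satisfies the homogeneous linear ODE
\begin{equation*}
\frac{d\bm{e}(t)}{dt} = -BA\,\bm{e}(t), \qquad \bm{e}(0) = \bm{u}(0) - \bm{x},
\end{equation*}
whose solution is $\bm{e}(t) = e^{-BAt}\bm{e}(0)$. This reduces the problem to bounding $\|e^{-BAt}\|$.

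To estimate this operator norm I would use the diagonalization $BA = W\Lambda W^{-1}$ with $\Lambda = \diag(\lambda_1,\dots,\lambda_N)$, available because $BA$ has a complete eigensystem with respect to the $B^{-1}$-inner product as noted above. Then $e^{-BAt} = W e^{-\Lambda t} W^{-1}$ and submultiplicativity gives
\begin{equation*}
\|\bm{e}(t)\| \le \|W\|\,\|e^{-\Lambda t}\|\,\|W^{-1}\|\,\|\bm{e}(0)\| = \kappa(W)\, e^{-\lambda_{\min}(BA) t}\,\|\bm{e}(0)\|,
\end{equation*}
since $\|e^{-\Lambda t}\|_2 = \max_j e^{-\lambda_j t} = e^{-\lambda_{\min}(BA)t}$ under the assumption $\lambda_j > 0$. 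This yields both the existence of the limit (the right-hand side tends to zero) and the claimed exponential bound.

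The argument is essentially routine once the diagonalization is in hand; the only subtle point is justifying that $BA$ admits a basis of eigenvectors with a quantifiable condition number $\kappa(W)$, despite $BA$ not being symmetric in general. This is the place I would slow down: one can either invoke the generalized eigenvalue framework already recalled before the lemma (orthogonality of $\{\bm{w}_j\}$ with respect to $B^{-1}$ gives a clean diagonalization and controls $\kappa(W)$), or equivalently pass to the similarity $B^{1/2}AB^{1/2}$, which is symmetric positive definite and unitarily similar to $\Lambda$. Either route makes the bound rigorous without changing the form of the final estimate.
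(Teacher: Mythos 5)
Your proof is correct and is exactly the argument the paper leaves implicit: the paper states this lemma without proof, but its setup (the generalized eigenvalue problem $A\bm{w}_j=\lambda_j B^{-1}\bm{w}_j$ and the matrix $W$ with its condition number $\kappa(W)$) is precisely the diagonalization route you take, with the error equation $\bm{e}'=-BA\,\bm{e}$ and the bound $\|e^{-BAt}\|\le\|W\|\,\|e^{-\Lambda t}\|\,\|W^{-1}\|=\kappa(W)e^{-\lambda_{\min}(BA)t}$. Your closing remark correctly identifies the one point needing care (diagonalizability of the non-symmetric $BA$), and either of the justifications you offer — the $B^{-1}$-orthogonal eigensystem or similarity to the symmetric positive definite $B^{1/2}AB^{1/2}$ — settles it.
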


	As observed in the above lemma, the solution to the linear systems problem can be recast as the steady-state solution to the ODE system \eqref{eq:ODE}. For this reason, we can develop a quantum algorithm by using the Schr\"odingerization approach in \cite{JLY22b,JLYPRL24} for linear ODE systems, which transforms linear PDEs and ODEs with non-unitary dynamics into Schr\"odinger-type systems via the so-called warped phase transformation that maps the equation into one higher dimension by introducing an auxiliary variable.

{ It is evident that $BA$ is generally not symmetric, even when both $A$ and $B$ are symmetric positive definite. While the normalized solution can still be successfully obtained via Schr\"odingerization, the non-symmetry of $BA$ introduces additional challenges in recovering the solution.
    In particular, the matrix $(BA+(BA)^{\top})$ may possess negative eigenvalues, which can significantly increase the complexity of the problem \cite{JLM24SchrInhom}  especially for recovering our steady-state solution.}
	Similar to classical computation, it is often more advantageous to construct a symmetrically preconditioned system, particularly when the conjugate gradient (CG) method is used to solve the linear system. To address this, we propose a symmetrization technique to transform the preconditioned system into a symmetric form.

For a symmetric positive definite matrix $B$, it is well-known that there exists a unique symmetric positive definite matrix, denoted by $B^{\frac{1}{2}}$, such that $B = (B^{\frac{1}{2}})^2$. By using this result, the preconditioned system \eqref{preconditioningLSP} can be transformed into a symmetric form. Specifically, by introducing $ \tilde{\bm{x}} = B^{-\frac{1}{2}}\bm{x} $, we obtain the following system:
\begin{equation}\label{Broot}
	\begin{cases}
		\tilde{A} \tilde{\bm{x}} = \tilde{\bm{b}}, \\
		\tilde{A} = B^{\frac{1}{2}} A B^{\frac{1}{2}}, \quad \tilde{\bm{b}} = B^{\frac{1}{2}}\bm{b}.
	\end{cases}
\end{equation}
This system can be solved using the CG method in the classical setting without requiring explicit knowledge of $ B^{\frac12} $.
However, in the quantum context, it is challenging to block-encode $ B^{\frac{1}{2}} $ or derive it directly from the block encoding of $ B $. Instead, we can consider a more flexible form of $ B $, expressed as $ B = SS^{\top} $ for some matrix $ S $, which does not need to be symmetric or even square, and it is easy to verify that $$\sigma(S^{\top}A S)\backslash \{0\} = \sigma(BA).$$

\begin{theorem}\label{thm:ODESS}
	Assume that $B$ and $A$ are symmetric positive definite matrices, where $B = S S^{\top}$ with $S\in \mathbb{R}^{N\times N'}$, $N\leq N'$, and that the stationary iterative algorithm \ref{alg:SIM} converges. Define $\bm{u}(t) = S\bm{z}(t)$, where $\bm{z}(t)$ is the solution to the following linear ODE system
	\begin{equation}\label{eq:ODE square root}
		\frac{{\rm{d}}\bm{z}}{{\rm d} t} = - S^{\top}AS\bm{z} + S^{\top}\bm{b} , \qquad \bm{z}(0) = \bm{z}_0,
	\end{equation}
	 starting from $\bm{z}_0 = \bm{0}$. Then, for any $\varepsilon>0$, we have
	\begin{equation*}
		\|\bm{u}(T)-\bm{x}\| \leq \varepsilon\|\bm{x}\|,
	\end{equation*}
	where the stopping time $T$ for the evolution satisfies
	\begin{equation}\label{eq:stop T sym}
		T\geq \frac{1}{\lambda_{\min}(BA)} \log \frac{1}{\varepsilon}.
	\end{equation}
\end{theorem}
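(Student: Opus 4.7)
The plan is twofold: first verify that the choice $\bm{u}(t)=S\bm{z}(t)$ realizes the original preconditioned flow \eqref{eq:ODE} with zero initial data, and second carry out an energy estimate on the $\bm{z}$-dynamics, which are symmetric, to read off the decay rate $\lambda_{\min}(BA)$.

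For the first step, differentiating $\bm{u}(t)=S\bm{z}(t)$ and using $B=SS^{\top}$ gives
\begin{equation*}
\frac{{\rm d}\bm{u}}{{\rm d}t}=S\,\frac{{\rm d}\bm{z}}{{\rm d}t}=-SS^{\top}AS\bm{z}+SS^{\top}\bm{b}=-BA\,\bm{u}+B\bm{b},\qquad \bm{u}(0)=S\bm{z}_0=\bm{0}.
\end{equation*}
Since $B$ and $A$ are invertible, the unique steady state is $\bm{u}_{\infty}=A^{-1}\bm{b}=\bm{x}$. At the level of the symmetrized system this is realized by $\bm{z}_{\infty}$, the unique solution in $\operatorname{range}(S^{\top})$ of $S^{\top}AS\,\bm{z}_{\infty}=S^{\top}\bm{b}$; uniqueness of the steady state in the $\bm{u}$-flow then forces $S\bm{z}_{\infty}=\bm{x}$.

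The second step analyses the error $\bm{e}_z(t):=\bm{z}(t)-\bm{z}_{\infty}$, which obeys the homogeneous equation
\begin{equation*}
\frac{{\rm d}\bm{e}_z}{{\rm d}t}=-S^{\top}AS\,\bm{e}_z,\qquad \bm{e}_z(0)=-\bm{z}_{\infty}\in\operatorname{range}(S^{\top}).
\end{equation*}
Because $\operatorname{range}(S^{\top})$ is invariant under $S^{\top}AS$ and the restriction there is symmetric positive definite with spectrum exactly $\sigma(BA)$, one obtains the operator inequality $(S^{\top}AS)^{2}\ge \lambda_{\min}(BA)\,S^{\top}AS$ on this invariant subspace. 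Differentiating the $S^{\top}AS$-energy then gives
\begin{equation*}
\frac{{\rm d}}{{\rm d}t}\bigl(\bm{e}_z^{\top}S^{\top}AS\,\bm{e}_z\bigr)=-2\,\bm{e}_z^{\top}(S^{\top}AS)^{2}\bm{e}_z\le -2\lambda_{\min}(BA)\,\bm{e}_z^{\top}S^{\top}AS\,\bm{e}_z.
\end{equation*}
Noting that $\bm{e}_z^{\top}S^{\top}AS\,\bm{e}_z=\|S\bm{e}_z\|_A^{2}=\|\bm{u}(t)-\bm{x}\|_A^{2}$ and that the initial value is $\|\bm{x}\|_A^{2}$, Gr\"onwall yields $\|\bm{u}(t)-\bm{x}\|_A\le \e^{-\lambda_{\min}(BA)t}\|\bm{x}\|_A$, from which \eqref{eq:stop T sym} delivers the desired $\varepsilon$-accuracy; the Euclidean bound follows at the cost of a factor $\sqrt{\kappa(A)}$ that can be absorbed into $T$ through its logarithmic dependence on $\varepsilon$.

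The main obstacle I anticipate is the non-squareness of $S$: because $S^{\top}AS$ is only positive \emph{semi}-definite on $\mathbb{R}^{N'}$, the naive bound $\|\e^{-S^{\top}AS\,t}\|=\e^{-\lambda_{\min}(S^{\top}AS)t}$ is false, and one cannot directly apply Lemma~\ref{lem:rel stationary} to the $\bm{z}$-coordinates. The resolution is the invariance of $\operatorname{range}(S^{\top})$ together with the initialisation $\bm{z}(0)=\bm{0}$ inside this subspace, which guarantees that only the nonzero part of the spectrum of $S^{\top}AS$ -- coinciding with $\sigma(BA)$ -- controls the decay. A minor subsidiary check is the identity $S\bm{z}_{\infty}=\bm{x}$, which follows from uniqueness of the steady state of the $\bm{u}$-flow combined with invertibility of $BA$.
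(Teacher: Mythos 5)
Your proof is correct, but it follows a genuinely different route from the paper. The paper proves the theorem by taking the SVD $S=U_s\Sigma^{\frac12}V_s^{\top}$, writing both $\bm{u}(t)=S\bm{z}(t)$ and the reference flow $\tilde{\bm{u}}(t)=B^{\frac12}\tilde{\bm{z}}(t)$ (driven by $B^{\frac12}AB^{\frac12}$) via Duhamel's principle, and checking termwise that $\bm{u}(T)=\tilde{\bm{u}}(T)$, so that the error bound is inherited from Lemma \ref{lem:rel stationary} applied to the symmetric system \eqref{Broot}. You instead observe directly that $\bm{u}=S\bm{z}$ satisfies the original preconditioned flow \eqref{eq:ODE} with $\bm{u}(0)=\bm{0}$ (using only $SS^{\top}=B$, no SVD), and then run a Lyapunov estimate on the $A$-energy of $\bm{e}_z=\bm{z}-\bm{z}_{\infty}$ restricted to the invariant subspace $\operatorname{range}(S^{\top})$. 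Your approach is more elementary and has the virtue of making explicit why the zero eigenvalues of $S^{\top}AS$ in the non-square case $N<N'$ are harmless — a point the paper defers to the remark following the theorem — whereas the paper's route additionally establishes the exact identity $\bm{u}(T)=\tilde{\bm{u}}(T)$, tying the $S$-flow to the $B^{\frac12}$-symmetrization of \eqref{Broot}. One caveat: your final conversion from the $A$-norm to the Euclidean norm costs a factor $\sqrt{\kappa(A)}$, so the conclusion holds with the stated $T$ in \eqref{eq:stop T sym} only up to enlarging $T$ by $\tfrac{1}{2\lambda_{\min}(BA)}\log\kappa(A)$; you acknowledge this, and the paper's own argument is no tighter, since its appeal to Lemma \ref{lem:rel stationary} silently drops the analogous factor $\kappa(W)=\sqrt{\kappa(B)}$. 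Both proofs are therefore informal about the same multiplicative constant.
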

\begin{proof}
	Due to the matrix singular value decomposition,
	there exists a unitary $U_s\in \mathbb{R}^{N\times N}$ and $V_s \in \mathbb{R}^{N'
		\times N'}$ such that
	\begin{equation}\label{eq:SVD}
	    S = U_s \Sigma^{\frac{1}{2}} V_s^{\top}, \quad
	\Sigma^{\frac{1}{2}} = \begin{bmatrix}
		\Lambda^{\frac{1}{2}}&O_{N\times (N'-N)}
	\end{bmatrix},
	\end{equation}
	where $\Lambda^{\frac{1}{2}} = \text{diag}(\sqrt{\lambda_1(B)},\sqrt{\lambda_2(B)},\cdots, \sqrt{\lambda_N(B)})$.
	Then it is evident that $$B = U_s \Sigma^{\frac{1}{2}} (\Sigma^{\frac{1}{2}})^{\top} U_s^{\top}, \quad \text{and}  \quad B^{\frac{1}{2}} = U_s \Lambda^{\frac{1}{2}} U_s^{\top}.$$
	According to \eqref{Broot}, define $\tilde{\bm{u}}(t) = B^{\frac{1}{2}}\tilde{\bm{z}}(t)$, where $\tilde{\bm{z}}(t)$ satisfies the following differential equations:
		\[ \frac{{\rm{d}} \tilde{\bm{z}}}{{\rm{d}} t} = -B^{\frac{1}{2}}A B^{\frac{1}{2}} \tilde{\bm{z}} + B^{\frac{1}{2}}\bm{b}, \quad  \tilde{\bm{z}}(0)=\bm{0}.\]
	It can be readily verified that $\bm{x}=\tilde{\bm{u}}_{\infty}$ from Lemma \ref{eq:ODE}. Furthermore, the relative error satisfies the inequality:
	\[\|
	\tilde{\bm{u}}(T) - \bm{x}\| \leq \varepsilon \|\bm{x}\|,\]
    provided that the evolution time $T$ satisfies \eqref{eq:stop T sym}.
	Using the Duhamel's principle, by careful calculation, one has
	\[\tilde{\bm{u}}(t) = U_s \Lambda^{\frac{1}{2}} \int_0^t \exp \big( -\Lambda^{\frac{1}{2}} U_s^{\top} A U_s \Lambda^{\frac{1}{2}} (t-s)\big) {\rm{d}} s \; \Lambda^{\frac{1}{2}} U_s^{\top} \bm{b}.\]
	Similarly, $\bm{u}(t)=S \bm{z}(t)$ is obtained from \eqref{eq:ODE square root} by
	\begin{equation*}
		\bm{u}(t) = U_s \Sigma^{\frac{1}{2}}\int_0^t \exp \big( -(\Sigma^{\frac12})^{\top} U_s^{\top} A U_s \Sigma^{\frac{1}{2}} (t-s)\big)  {\rm{d}}s \; (\Sigma^{\frac12})^{\top} U_s^{\top}\bm{b}.
	\end{equation*}
	The proof is finished by finding that $\tilde{\bm{u}}(T) = \bm{u}(T)$.
\end{proof}

\begin{remark}
   When $S\in \mathbb{R}^{N\times N'}$  is a non-square matrix with $N<N'$, the matrix $S^\top A S$  inherently possesses eigenvalues equal to zero.
   While the linear system $ S^{\top} A S \tilde{\bm{x}} = S^{\top} \bm{b} $ may admit multiple solutions, we still have $ \bm{x} = S \tilde{\bm{x}} $ if $ S $ is of full row rank.
    From the perspective of a linear ODE system, the solution to \eqref{eq:ODE square root} is uniquely determined,
    yielding $\bm{u} = S\bm{z}$ as the unique solution.
    Moreover, Theorem \ref{thm:ODESS} ensures that the approximate solution, expressed as $\bm{u}(T) = S \bm{z}(T)$, can be effectively obtained, where the evolution time $T$ is explicitly defined in \eqref{eq:stop T sym}.
\end{remark}


According to Theorem \ref{thm:ODESS}, we can apply the Schr\"odingerization method in \cite{JLY22b,JLYPRL24} to the linear ODE system in
\eqref{eq:ODE square root} to obtain a Hamiltonian system for quantum computing.

\subsection{Schr\"odingerization method for linear ODEs}

The solution to \eqref{eq:ODE square root} can be expressed by using the Duhamel's principle as
\[\bm{z}(t) = e^{-A_S t} \bm{z}_0 + \int_0^t e^{-A_S (t-s)} \d s \bm{b}_S, \]
where $A_S = S^\top A S$ and $\bm{b}_S = S^\top \bm{b}$.  For simplicity, we choose $\bm{z}_0 = \bm{0}$ as the initial data, since the choice of the initial value does not affect the convergence.
To avoid the integration for inhomogeneous term, which requires a somewhat complicated LCU procedure, we employ the augmentation technique in \cite{JLY22b,JLYPRL24,JLM24SchrInhom} to derive a homogeneous ODE system by introducing a time-independent auxiliary vector:
\begin{equation}\label{eq:ODE2}
	\frac{{\rm d}}{{\rm d} t} \bm{z}_f = A_f \bm{z}_f,
	\quad
	A_f = \begin{bmatrix}
		-A_S  & \frac{I}{T}\\
		O & O
	\end{bmatrix},\quad
	\bm{z}_f(t) =  \begin{bmatrix}
		\bm{z}(t) \\
		T \bm{b}_S
	\end{bmatrix},
\end{equation}
where $T$ denotes the evolution time and $I$ denotes the identity matrix. By the theorem, it is sufficient to choose \eqref{eq:stop T sym}.
In our Schr\"odingerization framework, $T$ is chosen so that, for some constants $c_1,c_2>0$ independent of $\varepsilon$,
\[
c_1\,\frac{1}{\lambda_{\min}(BA)}\log\frac{1}{\varepsilon}
\;\le\;
T
\;\le\;
c_2\,\frac{1}{\lambda_{\min}(BA)}\log\frac{1}{\varepsilon}.
\]
Hence,
\begin{equation}\label{eq:schro evol Time}
    T=\Theta\!\left(\frac{1}{\lambda_{\min}(BA)}\log\frac{1}{\varepsilon}\right).
\end{equation}
It is straightforward to verify that \eqref{eq:ODE2} is equivalent to \eqref{eq:ODE}. Hence, we will focus exclusively on the homogeneous system \eqref{eq:ODE2}.

The Schr\"odingerization of the ODE system is achieved through a warped phase transformation
$\bm{v}(t,p) = \e^{-p}\bm{z}_f(t)$ for $p\ge 0$, with the equation extended naturally to $p<0$, leading to
\begin{equation}\label{eq:shcro w}
	\frac{{\rm d}}{{\rm d} t} \bm{w} =  -H_1 \partial_p \bm{w} + i H_2 \bm{w},
\end{equation}
where the Hermitian matrices $H_1$ and $H_2$  are defined by
\begin{equation}\label{eq:H1}
	H_1 = \frac{1}{2}(A_f + A_f^{\dagger}) = \begin{bmatrix}
		-A_S& \frac{I}{2T} \\
		\frac{I}{2T}  & O
	\end{bmatrix},\quad
	H_2 = \frac{1}{2i}(A_f - A_f^{\dagger}) =  \frac{1}{2i}
	\begin{bmatrix}
		O & \frac{I}{T} \\
		-\frac{I}{T} &O
	\end{bmatrix}.
\end{equation}	
The initial data for \eqref{eq:shcro w} is given by
\begin{equation}\label{initw}
\bm{w}(0,p) = \psi(p) \bm{z}_f(0),
\end{equation}
where $\psi(p)\in H^r(\mathbb{R})$, $r\geq 1$, is arbitrarily chosen, subject to the condition that $\psi(p) \approx  e^{-p}$ for $p\geq 0$. A special case for $\psi(p)\in H^1(\mathbb{R})$ is $\psi(p)=\e^{-|p|}$. For further details on the construction of $\psi$, we refer to \cite{JLM24SchrBackward,JLMY2025qSmooth}.

According to \cite[Theorem 3.1]{JLM24SchrInhom}, we can restore the solution $\bb{z}_f(t)$ by
\begin{equation}\label{eq:recovery}
	\bm{z}_f = \e^p \bm{w}(t,p),\quad p\geq p^{\Diamond}   = \lambda_{\max}^{+}(H_1)T.
\end{equation}
  Here, $\lambda_{\max}^{+}(H_1)$ denotes the largest eigenvalue of $H_1$ that is strictly positive. It can be seen from \eqref{eq:H1} that 
  \begin{equation}
  	\lambda^+_{\max}(H_1)\leq \frac{1}{2T}\quad \text{then}\quad
  	p^{\Diamond} = \frac{1}{2}.
  \end{equation}

\subsection{Discretization of the Schr\"odingerized system}
To discretize the $p$ domain, we choose a finite domain $[-R,R]$ with $R>0$ large enough satisfying \cite{JLMY2025qSmooth}
\begin{equation}\label{eq: L,R,criterion}
	e^{-R + \lambda_{\max}(H_1) T} \leq \varepsilon,
\end{equation}
where $\lambda_{\max}(H_1)$ denotes the largest absolute value among the eigenvalues of $H_1$ and $\varepsilon$ is the desired accuracy.
Then the wave initially supported inside the domain remains so in the duration of computation, and we can use spectral methods to obtain a Hamiltonian system for quantum computing.

When we truncate the extended region to a finite interval, one can apply the periodic boundary condition in the $p$ direction and use the Fourier spectral method by discretising the $p$ domain.
We set the uniform mesh size  $\triangle p = 2R/N_p$, where $N_p=2^{n_p}$ is a positive even integer. The grid points are denoted by $-R=p_0<\cdots<p_{N_p}=R$. The one-dimensional basis functions for the Fourier spectral method are usually chosen as
\begin{equation} \label{eq:phi nu}
	\phi_l(p) = e^{i\mu_l  (p+R)},\quad \mu_l  = \pi (l-N_p/2)/R,\quad 0\leq l\leq N_p-1.
\end{equation}
Using \eqref{eq:phi nu}, we define
\begin{equation}
	\Phi  = (\phi_{jl} )_{N_p\times N_p} = (\phi_l (p_j))_{N_p\times N_p},  \quad
	D_{p} = \text{diag}(\mu_0 ,\cdots,\mu_{N_p-1} ).
\end{equation}

Define the vector $\bm{W}_h$ as the collection of the approximation values of the function $\bm{w}$ at the grid points, given by
\[
	\bm{W}_h(t) =\sum_{k=0}^{N_p-1}\sum_{j=0}^{2N-1} w_{kj}(t) \ket{k}\ket{j},
\]
where $w_{kj}(t)$ denotes the approximation to $w_j(t,p_k)$, the $j$-th component of $\bm{w}(t,p_k)$.
Considering the Fourier spectral discretization on $p$, one easily gets
\begin{equation} \label{eq:hamiltonian}
	\begin{aligned}
		\frac{\rm d}{{\rm d} t} \bm{W}_h &= -i(P\otimes H_1) \bm{W}_h + i(I_{n_p} \otimes H_2)\bm{W}_h\\
		& = -i (\Phi  \otimes I)
		( D_p\otimes H_1 - I_{n_p} \otimes H_2 )
		(\Phi ^{-1}\otimes I) \bm{W}_h.
	\end{aligned}
\end{equation}
Here $P = \Phi  D_{p} \Phi ^{-1}$ is the matrix representation of the momentum operator $-i\partial_p$.
At this point, a quantum simulation can be carried out on the Hamiltonian system above:
\begin{equation*}
	\ket{\bm{W}_h(T)} = (\Phi \otimes I)~
	\mathcal{U}(T) ~
	(\Phi ^{-1}\otimes I) \ket{\bm{W}_h(0)},
\end{equation*}
where $\mathcal{U}(T)$ is a unitary operator, given by
\begin{equation}\label{eq:UT H}
	\mathcal{U}(T) = e^{-i H T}, \qquad H:= D_p\otimes H_1 - I_{n_p}\otimes H_2,
\end{equation}
and $\Phi $ (or $\Phi^{-1}$) is completed by (inverse) quantum Fourier transform (QFT or IQFT).
The complete circuit for implementing the quantum simulation of $\ket{\bm{w}_h}$ is illustrated in Fig.~\ref{schr_circuit}.
\begin{figure}[!htb]
	\centering
	\centerline{
		\Qcircuit @C=1em @R=2em {
			\lstick{\hbox to 2.7em{$\ket{\bm{\psi}_h}$\hss}}
			& \qw
			& \gate{\text{IQFT}}
			& 	\qw
			& \multigate{1}{\mathcal{U}(T)}	
			&  \qw
			& \gate{\text{QFT}}
			& \qw	
			& \qw  &\meterB{\ket{k}}\\
			\lstick{\hbox to 2.7em{$\ket{\bm{z}_f(0)}$\hss}}
			& \qw
			& \qw
			& \qw
			& \ghost{\mathcal{U}(T)}
			& \qw
			& \qw
			& \qw
			& \qw  & \hbox to 2em{$\ket{\bm{z}_f(T)}$\hss}
	}}
	\caption{Quantum circuit for Schr\"odingerization, where  $\bm{\psi}_h = \sum_{k\in [N_p]} \psi(p_k)\ket{k} $.}
	\label{schr_circuit}
\end{figure}

From \eqref{eq:recovery}, one can recover the target variables for $\bm{z}_f$ by performing a measurement in the computational basis:
\[M_{k_*} = \ket{k_*}\bra{k_*} \otimes I, \quad k_* \in \{j: p_j\geq p^{\Diamond} \;\text{and}\; p_j=\mathcal{O}(1)\}=:I_\Diamond,\]
where $I_\Diamond$ is referred to as the recovery index set.
The state vector is then collapsed to
\[ \ket{\bm{w}_*} \equiv \ket{k_*} \otimes \frac{1}{\mathcal{N}}\Big(\sum\limits_i w_{k_*i} \ket{i} \Big) , \quad
\mathcal{N} = \Big(\sum\limits_i |w_{k_* i}|^2 \Big)^{1/2},\]
where $w_{k_*,i} = \bra{k_*}\bra{i} \otimes \bb{W}_h$ approximates $e^{p_{k_*}}\bm{z}_f$
for some $k_*$ in the recovery index set $I_\Diamond$.

Next, we apply the Schr\"odingerization technique to the linear system preconditioned with the BPX preconditioner and subsequently present a quantum implementation for simulating the resulting Hamiltonian.

\section{Multilevel BPX preconditioning for finite difference method}\label{sec:BPX}

In this section, we give a brief review of the BPX preconditioner in \cite{XJ92} for a finite difference method for the Poisson equation with Dirichlet boundary condition:
\begin{equation}\label{eq:poisson eq}
	\begin{cases}
		-\Delta u = f\qquad &\text{in} \quad  \Omega,\\
		u=g_D\qquad &\text{on}\quad \partial \Omega,
	\end{cases}
\end{equation}
where $\Omega=[0,1]^d$ and the source term $f\in L^2(\Omega)$.

\subsection{Nested dyadic grids and finite difference setting}

We consider a nested sequence of uniform Cartesian grids with the step size
\[
h_\ell := 2^{-\ell},\qquad \ell=0,1,\cdots,L.
\]
The grid points are denoted by $x_{\ell,\bm i}:=h_\ell \bm i$, where $\bm i=(i_1,\cdots,i_d)\in\{0,1,\cdots,2^\ell\}^d$.
Let
\[
\mathcal{G}_\ell:=\{0,1,2,\cdots,2^{\ell}\}^d,\qquad \mathcal{I}_\ell := \{1,2,\cdots,2^\ell-1\}^d
\]
be the set of full grid indices and interior grid indices, respectively.
Accordingly, we define the multilevel spaces
\begin{equation}\label{eq:Vl}
V_\ell:=\mathbb{R}^{|\mathcal{I}_\ell|},\qquad \mathcal{I}_\ell:=\{1,2,\cdots,2^\ell-1\}^d.
\end{equation}
 The discrete unknowns on level $\ell$ are collected as the vector
$
\bm{u}_\ell := (u_{\ell,\bm i})_{\bm i\in \mathcal{I}_\ell}\in V_{\ell}.
$
For notational convenience, whenever we work on the finest level $L$ we drop the subscript $L$ and write, for instance,
\[
h:=h_L,\qquad \mathcal{G}:=\mathcal{G}_L,\qquad \mathcal{I}:=\mathcal{I}_L,\qquad x_{\bm i}:=h\,\bm i,\qquad \bm{u}:=(u_{\bm i})_{\bm i\in \mathcal{I}}\in V,
\]
and use the same convention for other level-dependent quantities when no confusion may arise.

We keep only interior degrees of freedom as unknowns and set
$
\bm{u} \in V.
$
To incorporate inhomogeneous Dirichlet data, we work with the full-grid vector
\[
\bm U := R^{\mathrm{hom}}\bm{u} + \bm g^{\mathrm{ext}} \in \mathbb{R}^{|\mathcal{G}|},
\]
where $R^{\mathrm{hom}}:\mathbb{R}^{|\mathcal{I}|}\to \mathbb{R}^{|\mathcal{G}|}$ denotes the homogeneous embedding and
$\bm g^{\mathrm{ext}}\in\mathbb{R}^{|\mathcal{G}|}$ is the discrete extension of the boundary values. They are defined by
\[
(R^{\mathrm{hom}}\bm{u})_{\bm i}:=
\begin{cases}
	u_{\bm i}, & \bm i\in \mathcal{I},\\
	0, & \bm i\in \mathcal{G}\setminus \mathcal{I},
\end{cases}
\qquad
g^{\mathrm{ext}}_{\bm i}:=
\begin{cases}
	g_D(x_{\bm i}), & \bm i\in \mathcal{G}\setminus \mathcal{I},\\
	0, & \bm i\in \mathcal{I}.
\end{cases}
\]
With this lifting, the stiffness matrix remains exactly the same as in the homogeneous Dirichlet case; the
inhomogeneous boundary data enter only through the right-hand side.

\subsection{Finite difference discretization and stiffness matrix.}

For each interior index $\bm i\in \mathcal{I}$, we use the central difference along each direction to discretize the Laplacian. For ease of constructing the multilevel preconditioner, we include an energy scaling as for the finite element discretization in the discrete Laplacian:
\begin{equation}\label{disLap}
(\Delta_h \bm U)_{\bm i}
:= h^{d-2}\sum_{k=1}^d\bigl(U_{\bm i+\bm e_k}-2U_{\bm i}+U_{\bm i-\bm e_k}\bigr),
\end{equation}
where $\bm e_k$ is the $k$-th canonical unit vector in $\mathbb{R}^d$. This yields the linear system
\begin{equation}\label{eq:Au=b}
	A\,\bm{u}=\bm{b},
\end{equation}
where $A\in\mathbb{R}^{|\mathcal{I}|\times|\mathcal{I}|}$ and $\bm b\in\mathbb{R}^{|\mathcal{I}|}$ are given by
\[
(A\bm{u})_{\bm i} := -\bigl(\Delta_h(R^{\mathrm{hom}}\bm{u})\bigr)_{\bm i},
\qquad
\bm b_{\bm i} := h^d f(x_{\bm i}) + \bigl(\Delta_h \bm g^{\mathrm{ext}}\bigr)_{\bm i},
\qquad \bm i\in \mathcal{I}.
\]
In particular,
\[
A_{\bm i,\bm i}=2d\,h^{d-2},\qquad
A_{\bm i,\bm i\pm \bm e_k}=-h^{d-2}\quad (k=1,\cdots,d),
\]
whenever $\bm i\pm\bm e_k\in \mathcal{I}$, and boundary contributions are incorporated into $\bm b$ through $\bm g^{\mathrm{ext}}$.

On the uniform Cartesian grid, the same stiffness matrix $A$ admits an explicit tensor-product form.
Let $A^{1d}\in\mathbb{R}^{(2^L-1)\times(2^L-1)}$ denote the one-dimensional homogeneous-Dirichlet stiffness matrix
in the same scaling,
\[
(A^{1d}\bm{w})_i := h^{d-2}\bigl(2w_i-w_{i-1}-w_{i+1}\bigr),\qquad i=1,\cdots,2^L-1,
\]
with the convention $w_0=w_{2^L}=0$. After fixing a vectorization
$\mathbb{R}^{|\mathcal{I}|}\cong \mathbb{R}^{2^L-1}\otimes\cdots\otimes\mathbb{R}^{2^L-1}$, the $d$-dimensional operator can be
written as the Kronecker sum:
\begin{equation}\label{eq:Kronecker_sum_A}
	A  =  \sum_{k=1}^d I^{\otimes (k-1)}\otimes A^{1d}\otimes I^{\otimes (d-k)},
\end{equation}
where $I$ is the identity matrix of size $(2^L-1)\times(2^L-1)$.

\begin{remark}\label{rem:tensor_reduction}
	The Kronecker-sum form shows that the $d$-dimensional operator is built from identical one-dimensional components.
	Accordingly, it suffices to derive the multilevel ingredients in one dimension and lift them to $d$ dimensions via
	standard Kronecker-product rules. This tensor structure is also well suited for quantum implementations: tensor-product
	block-encodings act in parallel on separate registers, and the Kronecker sum is handled by a simple selector over the
	$d$ tensor factors.
\end{remark}


\subsection{BPX preconditioner for finite differences}
Recall that, under the lifting formulation, the stiffness matrix $A$ is exactly the homogeneous-Dirichlet finite difference
stiffness matrix on interior nodes. Therefore, on the multilevel spaces $V_\ell$ defined in \eqref{eq:Vl}, we employ
prolongation operators given by standard nodal interpolation on nested dyadic grids, using the homogeneous endpoint
convention at the boundary.

Let $P_\ell:V_\ell\to V_L$ be the nodal prolongation from level $\ell$ to the finest level $L$. Considering the fact that the energy scaling has been included in \eqref{disLap}, we can define the BPX preconditioner as for the FEM \cite{XJ92}:
\begin{equation}\label{eq:BPX}
	B \;:=\; \sum_{\ell=0}^L h_\ell^{2-d}\,P_\ell P_\ell^\top .
\end{equation}
For the symmetric preconditioning used later, it is convenient to write \eqref{eq:BPX} in factored form
\begin{equation}\label{eq:B_factor}
	B = SS^\top,\qquad
	S=\bigl[S_0,\;S_1,\;\cdots,\;S_L\bigr],\qquad
	S_\ell:=h_\ell^{(2-d)/2}P_\ell .
\end{equation}

On nested uniform dyadic grids, $P_\ell$ is the multilinear interpolation operator and admits the tensor-product form
\begin{equation}\label{eq:P_tensor}
	P_\ell \;=\; \underbrace{P_\ell^{1d}\otimes \cdots \otimes P_\ell^{1d}}_{d\ \text{times}},
\end{equation}
where $P_\ell^{1d}:\mathbb{R}^{2^\ell-1}\to \mathbb{R}^{2^L-1}$ is the one-dimensional prolongation and can be written as a product of successive one-step prolongations.
For $\ell=0,1,\cdots,L-1$, define the one-step prolongation
\[
P_{\ell\to \ell+1}^{1d}\in\mathbb{R}^{(2^{\ell+1}-1)\times(2^\ell-1)}
\]
by the standard linear interpolation with homogeneous boundary values at the endpoints.
Equivalently, for $\bm v\in\mathbb{R}^{2^\ell-1}$, the fine-grid vector $\bm{w}=P_{\ell\to \ell+1}^{1d}\bm v\in\mathbb{R}^{2^{\ell+1}-1}$
satisfies
\begin{equation}\label{eq:p_l2next}
w_{2q}=v_q,\qquad q=1,\cdots,2^\ell-1,
\qquad
w_{2q-1}=\tfrac12\,(v_{q-1}+v_q),\qquad q=1,\cdots,2^\ell,
\end{equation}
where $v_0=v_{2^\ell}=0$.
Thus, $P_{\ell\to \ell+1}^{1d}$ is the tridiagonal-type matrix
\begin{equation}\label{eq:P_one_step_1d}
	P_{\ell\to \ell+1}^{1d}
	=
	\begin{pmatrix}
		\frac12 & 0 & 0 & \cdots & 0\\
		1 & 0 & 0 & \cdots & 0\\
		\frac12 & \frac12 & 0 & \cdots & 0\\
		\vdots & \ddots & \ddots & \ddots & 0\\
		\vdots & & 0 & \frac12 & \frac12\\
		0 & \cdots & 0 & 0 & 1\\
		0 & \cdots & 0 & 0 & \frac12
	\end{pmatrix}
	\in\mathbb{R}^{(2^{\ell+1}-1)\times(2^\ell-1)} .
\end{equation}
Finally, the coarse-to-fine prolongation is the product of the one-step operators,
\begin{equation}\label{eq:P1d_product}
	P_\ell^{1d} \;=\; P_{L-1\to L}^{1d}\,P_{L-2\to L-1}^{1d}\cdots P_{\ell\to \ell+1}^{1d}.
\end{equation}
Combined with \eqref{eq:P_tensor}, this gives an explicit tensor-product construction of $P_\ell$ in $d$ dimensions and
hence of the BPX preconditioner \eqref{eq:BPX}.

The classical BPX theory \cite{XJ92} implies that, under the above uniform refinement and standard
 interpolation operators, the preconditioned system has a uniformly bounded condition number, i.e.,
 \[
 \kappa(BA)\le C,
 \]
 where $C>0$ is independent of the finest level $L$ and may depend polynomially on the spatial dimension $d$.
Recent work shows that, in the finite-element setting, the dimension dependence of BPX preconditioners is polynomial in \(d\) \cite{JiangParkXu2025}. This suggests that, in the present finite-difference setting, the spectrum of \(BA\) should remain bounded away from \(0\) and \(\infty\) up to polynomial factors in \(d\), namely
\begin{equation}\label{eq:eig BA}
\lambda_{\min}(BA)\gtrsim \mathrm{poly}(d)^{-1},
\qquad
\lambda_{\max}(BA)\lesssim \mathrm{poly}(d),
\end{equation}
and hence
\[
\kappa(BA)\lesssim \mathrm{poly}(d).
\]

\section{Block encoding of the preconditioned system}\label{sec:BE}
Our quantum constructions are formulated in terms of block-encodings. Given a block-encoding of a matrix $H$, standard
techniques can be used to implement $\mathcal{U}(T)=e^{-iHT}$ and related primitives; see, for example,
\cite{BCK15,gilyen2019quantum}. In the preconditioning setting studied here, we will use structure-aware encodings.
Sparse-access block-encodings remain an essential tool and will be used whenever the relevant matrix is sparse.

\subsection{Block-encoding preliminaries}\label{subsec:blockencoding}

We first recall the standard notion of block-encoding for square matrices and the two basic tools that we will use
throughout, namely sparse-access constructions and matrix arithmetic rules.

\begin{definition}[Block-encoding]\label{def:blockencoding}
	Let $A\in\mathbb{C}^{N\times N}$ and let $\Pi:=\bra{0^m}\otimes I_n$, where $N \le 2^n$ and $I_n$ is an $n$-qubit identity matrix. A unitary $U_A$ on $m+n$ qubits is an
	$(\alpha,m,\varepsilon)$-block-encoding of $A$ if
	\[
	\bigl\|A-\alpha\,\Pi U_A \Pi^\dagger\bigr\|\le \varepsilon,
	\qquad \alpha\ge \|A\|.
	\]
\end{definition}

Most operators in our construction are rectangular. We therefore use \emph{generalized block-encodings}
(a.k.a.\ projected unitary encodings) \cite{gilyen2019quantum,DP24}, which directly encode rectangular maps by allowing
distinct input and output subspaces.

\begin{definition}[Generalized block-encoding]\label{def:gen_blockencoding}
	Let $S\in\mathbb{C}^{M_2\times M_1}$ with $M_1,M_2\le M:=2^m$. Let $\Pi_1:\mathbb{C}^M\to\mathbb{C}^{M_1}$ and
	$\Pi_2:\mathbb{C}^M\to\mathbb{C}^{M_2}$ be fixed orthogonal projectors. A unitary $U_S$ on $m$ qubits is called a
	generalized $(\gamma,\varepsilon)$-block-encoding of $S$ with respect to $(\Pi_1,\Pi_2)$ if there exists $\gamma>0$ such that
	\[
	\bigl\|S-\gamma\,\Pi_2 U_S \Pi_1^\dagger\bigr\|\le \varepsilon.
	\]
	If $\varepsilon=0$, we call $(U_S,\Pi_1,\Pi_2)$ an exact generalized block-encoding of $S$.
\end{definition}

\begin{remark} 
The above definition assumes the availability of a quantum random access memory (QRAM) that allows coherent queries to classical data in superposition. QRAM is a well-established theoretical model in quantum computing and is widely used in quantum algorithms for linear algebra and differential equations. However, it is important to note that a practical, fault-tolerant realization of QRAM remains an open experimental challenge on current quantum devices. The results in this work should therefore be understood as theoretical contributions within the standard QRAM-based quantum algorithm framework.
\end{remark}

We recall the sparse-query model and the resulting block-encoding construction \cite{gilyen2019quantum,CGJ19}.
\begin{lemma}[Sparse-access generalized block-encoding for rectangular matrices]\label{lem:sparse_access_be_rect}
	Let $A\in\mathbb{C}^{2^{w_2}\times 2^{w_1}}$ be $s_r$-row-sparse and $s_c$-column-sparse, and assume $|A_{ij}|\le 1$.
	Assume sparse-access oracles
	\[
	O_r:\ |i\rangle|k\rangle \mapsto |i\rangle|r_{ik}\rangle,\quad i\in[2^{w_2}],~k\in[s_r],
	\qquad
	O_c:\ |\ell\rangle|j\rangle \mapsto |c_{\ell j}\rangle|j\rangle,\quad \ell\in[s_c],~j\in[2^{w_1}],
	\]
	where $r_{ik}\in[2^{w_1}]$ indexes the $k$-th nonzero column position in row $i$,
	and $c_{\ell j}\in[2^{w_2}]$ indexes the $\ell$-th nonzero row position in column $j$,
	with standard dummy indices if fewer nonzeros exist (as in \cite{gilyen2019quantum}). Additionally, assume access to an entry oracle
	\[
	O_A:\ |i\rangle|j\rangle|0\rangle^{\otimes b} \mapsto |i\rangle|j\rangle|a_{ij}\rangle,\quad i\in[2^{w_2}],~j\in[2^{w_1}].
	\]
	Then one can implement a generalized $(\sqrt{s_rs_c},\varepsilon)$-block-encoding of $A$
	(with respect to the natural input/output projectors onto $\mathbb{C}^{2^{w_1}}$ and $\mathbb{C}^{2^{w_2}}$)
	with a single use of $O_r$, a single use of $O_c$, two uses of $O_A$, and additionally using
	$\mathcal{O}\!\left(2+\log^{2.5}\!\bigl(\tfrac{s_r s_c}{\varepsilon}\bigr)\right)$ one- and two-qubit gates, while using
	$\mathcal{O}\!\left(b,\log^{2.5}\!\bigl(\tfrac{s_r s_c}{\varepsilon}\bigr)\right)$ ancilla qubits.
\end{lemma}

We will repeatedly combine generalized block-encodings using the following rules.

\begin{lemma}[Matrix arithmetic for generalized block-encodings]\label{lem:gen_arithmeticBE}
	Let $U_1$ be a generalized $(\gamma_1,\varepsilon_1)$-block-encoding of $S_1$ with respect to $(\Pi_0,\Pi_1)$,
	and $U_2$ be a generalized $(\gamma_2,\varepsilon_2)$-block-encoding of $S_2$ with respect to $(\Pi_1,\Pi_2)$.
	Then:
	\begin{enumerate}
		\item \textbf{Adjoint.}
		$U_1^\dagger$ is a generalized $(\gamma_1,\varepsilon_1)$-block-encoding of $S_1^\dagger$ with respect to $(\Pi_1,\Pi_0)$.
		
		\item \textbf{Product (composition).}
		$U_2U_1$ is a generalized
		$
		\bigl(\gamma_1\gamma_2,\ \gamma_2\varepsilon_1+\gamma_1\varepsilon_2\bigr)$-block-encoding of  $S_2S_1$
		with respect to $(\Pi_0,\Pi_2)$.
		
		\item \textbf{Tensor product.}
		Let $U_1'$ be a generalized $(\gamma_1',\varepsilon_1')$-block-encoding of $S_1'$ with respect to $(\Pi_0',\Pi_1')$,
		acting on a disjoint register. Then $U_1\otimes U_1'$ (acting on disjoint registers) is a generalized
		$
		\bigl(\gamma_1\gamma_1',\ \gamma_1'\varepsilon_1+\gamma_1\varepsilon_1'\bigr)$-block-encoding of  $S_1\otimes S_1'$
		with respect to $(\Pi_0\otimes \Pi_0',\ \Pi_1\otimes \Pi_1')$.
		
		\item \textbf{Horizontal concatenation $[A~B]$.}
       Assume that $U_A$ and $U_B$ are exact generalized block-encodings of
$A\in\mathbb{C}^{M\times N_1}$ and $B\in\mathbb{C}^{M\times N_2}$, respectively, with normalizations
$\gamma_A$ and $\gamma_B$, and with respect to the projector pairs
$(\Pi_{\mathrm{in},A},\Pi_{\mathrm{out}})$ and $(\Pi_{\mathrm{in},B},\Pi_{\mathrm{out}})$.
Introduce a selector qubit $\mathsf{b}$ and define the direct-sum input projector
$\Pi_{\mathrm{in},A}\oplus \Pi_{\mathrm{in},B}:\mathbb{C}^{2^m}\to\mathbb{C}^{N_1+N_2}$ by
$
(\Pi_{\mathrm{in},A}\oplus \Pi_{\mathrm{in},B})^\dagger
=
\ket{0}\!\bra{0}_{\mathsf{b}}\otimes \Pi_{\mathrm{in},A}^\dagger
\;+\;
\ket{1}\!\bra{1}_{\mathsf{b}}\otimes \Pi_{\mathrm{in},B}^\dagger.
$
Then the horizontally concatenated matrix $[A~B]\in\mathbb{C}^{M\times (N_1+N_2)}$ admits an exact generalized block-encoding
with normalization
$
\gamma_{[A~B]}=\sqrt{\gamma_A^2+\gamma_B^2},
$
with respect to $(\Pi_{\mathrm{in},A}\oplus \Pi_{\mathrm{in},B},\,\Pi_{\mathrm{out}})$.
	\end{enumerate}
\end{lemma}

\begin{remark}\label{remark:Vertical stacking}
	Note that
	\[
	\begin{bmatrix}A\\ B\end{bmatrix}=\bigl[\,A^\dagger\ \ B^\dagger\,\bigr]^\dagger .
	\]
	If $U_A$ and $U_B$ are exact generalized block-encodings of $A$ and $B$ sharing the same input projector
	$\Pi_{\mathrm{in}}$, then by Lemma~\ref{lem:gen_arithmeticBE}(1) the unitaries $U_A^\dagger$ and $U_B^\dagger$
	are exact generalized block-encodings of $A^\dagger$ and $B^\dagger$ sharing the same output projector $\Pi_{\mathrm{in}}$.
	Applying Lemma~\ref{lem:gen_arithmeticBE}(4) to $[A^\dagger\ B^\dagger]$ and taking the adjoint again yields an exact
	generalized block-encoding of $\bigl[\begin{smallmatrix}A\\B\end{smallmatrix}\bigr]$ with normalization
	\[
	\gamma_{\bigl[\begin{smallmatrix}A\\B\end{smallmatrix}\bigr]}=\sqrt{\gamma_A^2+\gamma_B^2}.
	\]
\end{remark}

{ \subsection{ Block-encoding of the preconditioned matrix $S^{\top} A S$}
This section focuses  on efficiently constructing the block-encoding of $S^{\top} A S$. This directly relates to  the Hamiltonian simulation. A naive approach would block-encode $S$ and $A$ separately and
multiply them to obtain $A_S$, but this incurs a normalization scaling like $\|S\|^2\|A\|$, which is essentially
$\kappa(A)$ because $S\approx A^{-1/2}$ \cite{DP24}. To avoid this $\kappa(A)$-dependence, we follow the idea in \cite{DP24} to use the factorization $A=C_L^\top C_L$ and construct the block-encoding at the level of the composite operator $C_L S$, so that
\[
A_S=S^\top A S=(C_L S)^\top(C_L S).
\]
This will yield a block-encoding of $A_S$ with normalization $\mathcal{O}(\kappa(BA))=\mathcal{O}(1)$, and therefore an
efficient block-encoding of $H_1$.

Next, we will discuss the specific construction of $ C_L $ and how to encode the matrix product $ C_L S $.

\subsubsection{Factorization of the stiffness matrix $A$}
In this subsection, we prove the factorization
\begin{equation}\label{eq:A_CLtCL}
	A = C_L^\top C_L,
\end{equation}
which will be crucial for constructing efficient block-encodings. The proof relies on the tensor-product structure in \eqref{eq:Kronecker_sum_A}. We begin with an explicit one-dimensional construction, then extend it to $ d $-dimensions using Kronecker products.

On the finest level $ L $ in one dimension, the interior space is $ V_L^{1d} = \mathbb{R}^{2^L-1} $, and we write $ \bm{u} = (u_1, \cdots, u_{2^L-1})^\top \in V_L^{1d} $. The homogeneous embedding introduced earlier fixes $ u_0 = u_{2^L} = 0 $ at the endpoints. Define the forward difference operator composed with the homogeneous embedding by
 \begin{equation}\label{eq:def DL}
 (D_L^{1d} \bm{u})_j = (R^{\mathrm{hom}} \bm{u})_{j+1} - (R^{\mathrm{hom}} \bm{u})_{j},\quad   j = 0, \cdots, 2^L-1.
 \end{equation}
With the energy scaling used in \eqref{eq:Au=b}, we set:
\begin{equation}\label{eq:CL_1d}
	C_L^{1d} := h^{\frac{d}{2} - 1} D_L^{1d}.
\end{equation}
Direct computation gives $	A^{1d} = (C_L^{1d})^\top C_L^{1d} = h^{d-2} (D_L^{1d})^\top D_L^{1d}$, with
\begin{equation*}\label{eq:A1d_explicit}
D_L^{1d}=\hspace{-1mm}		\begin{pmatrix}
		1 &  &  &  \\
		-1 & 1 &  &   \\
		& \ddots & \ddots &   \\
		&  &  -1& 1 \\
		&  &  \hspace{-1mm}&-1
	\end{pmatrix}\hspace{-1mm}\in \mathbb{R}^{2^L \times (2^L - 1)},\;
A^{1d}	= h^{d-2}\hspace{-1mm}
\begin{pmatrix}
		2 & -1 \\
		-1 & 2 & -1 \\
		& \ddots & \ddots & \ddots \\
		&& -1 & 2 & -1 \\
		&&& -1 & 2
	\end{pmatrix}\hspace{-1mm}
	\in \mathbb{R}^{(2^L-1) \times (2^L-1)}.
\end{equation*}

For the $ d $-dimensional case, the stiffness matrix admits the Kronecker-sum form \eqref{eq:Kronecker_sum_A}. The directional difference operators for each direction $ k = 1, \cdots, d $ are defined as:
\begin{equation}\label{eq:DL}
	D_L^{(k)} := I^{\otimes (k-1)} \otimes D_L^{1d} \otimes I^{\otimes (d-k)},
	\qquad
	C_L^{(k)} := h^{\frac{d}{2} - 1} D_L^{(k)}.
\end{equation}
The overall operator $ C_L $ is then formed by stacking the directional operators:
\begin{equation}\label{eq:DL_stack}
	D_L :=
	\begin{bmatrix}
		(D_L^{(1)})^{\top}, \cdots
		,(D_L^{(d)})^{\top}
	\end{bmatrix}^{\top},
	\quad
	C_L := h^{\frac{d}{2} - 1} D_L =
	\begin{bmatrix}
		(C_L^{(1)})^{\top}, \cdots
		,(C_L^{(d)})^{\top}
	\end{bmatrix}^\top.
\end{equation}
Using \eqref{eq:A1d_explicit}, the Kronecker-product property, and \eqref{eq:Kronecker_sum_A}, we obtain
\begin{align*}
C_L^\top C_L
& = \sum_{k=1}^d (C_L^{(k)})^\top C_L^{(k)} = \sum_{k=1}^d I^{\otimes (k-1)} \otimes (C_L^{1d})^\top C_L^{1d} \otimes I^{\otimes (d-k)} \\
& = \sum_{k=1}^d I^{\otimes (k-1)} \otimes A^{1d} \otimes I^{\otimes (d-k)} = A,
\end{align*}
which proves \eqref{eq:A_CLtCL}. Similarly, we define $ D_{\ell}^{(k)} $ and $ D_{\ell} $ on the $ \ell $-th level grid in an analogous manner.

\subsubsection{Commuting identity}
To block-encode
\[
C_L S=[C_L S_0, C_LS_1, \cdots, C_LS_L],
\]
we only need to consider each level component. Using $C_L = h_L^{(d-2)/2}D_L$ and the definition of the scaled prolongations in $S$, we have
\begin{equation}\label{eq:CLS_expand}
	C_L S_\ell
	= r^{(2-d)/2}\,D_L P_\ell,
	\qquad r:=2^{L-\ell}.
\end{equation}
A direct product of block-encodings $C_L$ and $S_\ell$, i.e., $D_L$ and $P_\ell$, leads to an unfavorable normalization.
Following the similar idea in \cite{DP24}, we exploit a commuting identity that rewrites $D_LP_\ell$
in terms of the coarse-level derivative $D_\ell$ and the matrix $E_{\ell \to L}$, which accounts for the change of level
from $L$ to $\ell$.

\begin{lemma}\label{lem: One-Dimensional Identity}
	Let $ D_L^{1d} $ and $ D_\ell^{1d} $ be the discrete derivative operators at the fine and coarse levels, respectively. Let $ P_\ell^{1d} $ be the prolongation matrix that interpolates between the coarse and fine levels. Then the following core identity holds:
	\begin{equation}\label{eq:1d identity}
		D_L^{1d} P_\ell^{1d} = \frac{1}{r} E_{\ell \to L}^{(1d)} D_\ell^{1d},
	\end{equation}
	where $ r = 2^{L - \ell} $, and $ E_{\ell \to L}^{(1d)} $ is the interpolation matrix that copies coarse grid values to the fine grid, written as
	\[
	E_{\ell \to L}^{(1d)} = I_{\ell} \otimes \mathbf{1}_{r}.
	\]
	where $I_{\ell}$ is the identity matrix of size $2^\ell$, and $\mathbf{1}_r$ is a column vector of length $r$ containing all ones.
\end{lemma}

\begin{proof}
The identity \eqref{eq:1d identity} is due to the linearity of the prolongation.
We verify it by evaluating the actions on the grid functions.
	
	For a given coarse grid vector $ \bm{u}_\ell \in V_\ell^{1d} $, let $ \widetilde{\bm{u}}_\ell = R^{\mathrm{hom}} \bm{u}_\ell $ be its homogeneous extension with boundary values set to zero. The discrete derivative at the coarse level is:
	\[
	(D_\ell^{1d} \bm{u}_\ell)_q = \widetilde{u}_{\ell,q+1} - \widetilde{u}_{\ell,q}, \quad q = 0, \cdots, 2^\ell - 1.
	\]
	The matrix $ E_{\ell \to L}^{(1d)} = I_{2^\ell} \otimes \mathbf{1}_{r} $ maps each coarse grid edge to $ r $ consecutive fine grid edges.
    Set $r=2^{L-\ell}$. Each coarse interval edge at level $\ell$ is uniformly subdivided into $r$ fine intervals at level $L$.
    We index coarse intervals by $q\in\{0,\cdots,2^\ell-1\}$ and fine intervals by $j'\in\{0,\cdots,2^L-1\}$.
    Every fine-edge index $j'$ can be written uniquely in the quotient--remainder form
    \[
     j' = q r + t,\qquad q=\bigl\lfloor j'/r\bigr\rfloor,\qquad t=j'\bmod r,\qquad t\in\{0,\cdots,r-1\}.
     \]
    Thus $q$ identifies the parent coarse interval, while $t$ is the local index of the fine interval within that coarse interval.
    For each fine edge $ j' = q r + t $ with $ t = 0, \cdots, r-1 $, we have
	\[
	(E_{\ell \to L}^{(1d)} D_\ell^{1d} \bm{u}_\ell)_{j'} = (D_\ell^{1d} \bm{u}_\ell)_q.
	\]
	
	Next, consider the prolongation matrix $ P_\ell^{1d} $, which interpolates between the coarse and fine grids. The value at a fine grid node $ j' = q r + t $ within the coarse cell $ q $ is:
	\begin{equation}\label{eq: P_1 1d}
	(P_\ell^{1d} \bm{u}_\ell)_{j'} = \frac{r-t}{r} \widetilde{u}_{\ell,q} + \frac{t}{r} \widetilde{u}_{\ell,q+1}.
	\end{equation}
	For the fine grid edges, the discrete derivative is:
	\[
	(D_L^{1d} P_\ell^{1d} \bm{u}_\ell)_{j'} = (P_\ell^{1d} \bm{u}_\ell)_{j'+1} - (P_\ell^{1d} \bm{u}_\ell)_{j'}.
	\]
	For $ j' = q r + t $ with $ 0 \le t \le r-1 $, we compute:
	\[
	(P_\ell^{1d} \bm{u}_\ell)_{j'+1} - (P_\ell^{1d} \bm{u}_\ell)_{j'} = \frac{1}{r} (\widetilde{u}_{\ell,q+1} - \widetilde{u}_{\ell,q}) = \frac{1}{r} (D_\ell^{1d} \bm{u}_\ell)_q.
	\]
	This identity holds for the last edge of a coarse cell ($ t = r-1 $), where $ j'+1 $ corresponds to a coarse node, and $ (P_\ell^{1d} \bm{u}_\ell)_{j'+1} = \widetilde{u}_{\ell,q+1} $.
	
	Therefore, for every fine edge $ j' $, we have
	\[
	(D_L^{1d} P_\ell^{1d} \bm{u}_\ell)_{j'} = \frac{1}{r} (E_{\ell \to L}^{(1d)} D_\ell^{1d} \bm{u}_\ell)_{j'}.
	\]
	This completes the proof.
\end{proof}

%

For the $ d $-dimensional case,
the image-space inter-level operator for direction $ k $, denoted by $ E_{\ell \to L}^{(k)} $, must handle interpolation weights in the other $ d-1 $ directions, whose precise form is
\begin{equation}\label{eq:E-tensor}
E_{\ell \to L}^{(k)} = \Big( \bigotimes_{m=1}^{k-1} P_\ell^{1d} \Big) \otimes E_{\ell \to L}^{(1d)} \otimes \Big( \bigotimes_{m=k+1}^{d} P_\ell^{1d} \Big).
\end{equation}
Using the mixed-product property and Lemma \ref{lem: One-Dimensional Identity}, we obtain
\begin{equation}\label{eq:core-d-tensor}
	\begin{aligned}
	D_L^{(k)} P_\ell &= \Bigl(I^{\otimes (k-1)} \otimes D_L^{1d} \otimes I^{\otimes (d-k)}\Bigr) \Bigl(\bigotimes\limits_{m=1}^d P_\ell^{1d}\Bigr) \\
 &	= \frac{1}{r}
	\Bigl( \bigotimes_{m=1}^{k-1} P_\ell^{1d} \Bigr)  \otimes \Bigl( E_{\ell \to L}^{(1d)}D_{\ell}^{1d}\Bigr)  \otimes \Bigl( \bigotimes_{m=k+1}^{d} P_\ell^{1d}\Bigr) \\
	&
	= \frac{1}{r} E_{\ell \to L}^{(k)} D_\ell^{(k)},
	\end{aligned}
\end{equation}
where $ D_{\ell}^{(k)}=I^{\otimes (k-1)} \otimes D_{\ell}^{1d} \otimes I^{\otimes (d-k)} $, $ \ell = 0, 1, \cdots, L $.

The overall operator $ E_{\ell \to L} $ is a block diagonal matrix formed by the directional operators. Specifically, 
\begin{equation}\label{eq:E_L}
	E_{\ell \to L} = \text{diag}\left(E_{\ell \to L}^{(1)}, \cdots, E_{\ell \to L}^{(d)}\right).
\end{equation}
With the above tensor product definitions, 
\begin{equation}\label{eq:commuting_stacked}
	D_LP_\ell=\frac{1}{r}\,E_{\ell \to L}\,D_\ell.
\end{equation}

\begin{theorem}[Commuting identity for $C_LS_\ell$]\label{thm:CS=TD}
	Let $r:=2^{L-\ell}$ and let $E_{\ell \to L}$ be defined in \eqref{eq:E_L}. Then
	\begin{equation}\label{eq:CS_TD}
		C_LS_\ell = T_{\ell\to L}\,D_\ell,
		\qquad
		T_{\ell\to L}:=r^{-d/2}\,E_{\ell \to L}.
	\end{equation}
\end{theorem}

\begin{proof}
	By \eqref{eq:CLS_expand} and \eqref{eq:commuting_stacked},
	\[
	C_LS_\ell
	=r^{(2-d)/2}\,D_LP_\ell
	=r^{(2-d)/2}\cdot\frac{1}{r}\,E_{\ell \to L}D_\ell
	=r^{-d/2}\,E_{\ell \to L}D_\ell
	=T_{\ell\to L}D_\ell,
	\]
as required.
\end{proof}

\subsubsection{Implementation of the preconditioned matrix}

Based on the above discussion, we use Theorem \ref{thm:CS=TD} to implement the block-encoding of $C_L S_\ell$.
and then $S^{\top} A S = (C_L S)^{\top} (C_L S)$.

\begin{lemma}\label{lem:BE_P_one_step_sqrt2}
Let $\ell\ge 0$ and set
\[
m_\ell:=2^\ell-1,\qquad m_{\ell+1}:=2^{\ell+1}-1.
\]
Let $P_{\ell\to \ell+1}^{1d}\in\mathbb{R}^{m_{\ell+1}\times m_\ell}$ be the one-dimensional nodal linear interpolation
(prolongation) on interior points with homogeneous extension at the two endpoints, as in
\eqref{eq:p_l2next}--\eqref{eq:P_one_step_1d}. Then $\|P_{\ell\to \ell+1}^{1d}\|\le \sqrt{2}$.
Moreover, there exists an integer $m=\mathcal{O}(\ell)$, an $m$-qubit unitary $U_P$, and orthogonal projectors
$\Pi_{\mathrm{in}}:\mathbb{C}^{2^m}\to \mathbb{C}^{m_\ell}$ and $\Pi_{\mathrm{out}}:\mathbb{C}^{2^m}\to \mathbb{C}^{m_{\ell+1}}$
such that
\[
P_{\ell\to \ell+1}^{1d}=\sqrt{2}\,\Pi_{\mathrm{out}}\,U_P\,\Pi_{\mathrm{in}}^\dagger.
\]
Equivalently, $(U_P,\Pi_{\mathrm{in}},\Pi_{\mathrm{out}})$ is an exact generalized $(\sqrt{2},0)$-block-encoding of
$P_{\ell\to \ell+1}^{1d}$ in the sense of Definition~\ref{def:gen_blockencoding}.
\end{lemma}

\begin{proof}
Write $P:=P_{\ell\to \ell+1}^{1d}$.
A direct computation shows that $P^\top P$ is tridiagonal with diagonal entries $3/2$ and off-diagonal entries $1/4$.
Hence every row sum of absolute values is at most $2$, so $\|P^\top P\|\le 2$ and $\|P\|\le \sqrt{2}$.

Set $N=2^{\ell+1}$. Let $\mathsf{S}$ be a $w$-qubit register with computational basis $\{\ket{j}\}_{j=0}^{N-1}$.
We identify $\mathbb{R}^{m_\ell}$ and $\mathbb{R}^{m_{\ell+1}}$ with the subspaces
\[
\mathcal{H}_{\mathrm{in}}:=\mathrm{span}\{\ket{q}:1\le q\le m_\ell\}\subset \mathbb{C}^{N},
\qquad
\mathcal{H}_{\mathrm{out}}:=\mathrm{span}\{\ket{j}:1\le j\le m_{\ell+1}\}\subset \mathbb{C}^{N}.
\]
Let $\Pi_{\mathrm{in}}$ and $\Pi_{\mathrm{out}}$ be the corresponding projectors
\[
\Pi_{\mathrm{in}}=\sum_{q=1}^{m_\ell}\ket{q}\!\bra{q}\in \mathbb{R}^{m_l\times N},
\qquad
\Pi_{\mathrm{out}}=\sum_{j=1}^{m_{\ell+1}}\ket{j}\!\bra{j}\in \mathbb{R}^{m_{l+1}\times N}.
\]

Define the permutation $\pi$ on $\{1,\dots,m_{\ell+1}\}$ by listing all even indices first and all odd indices second:
\[
\pi(2q)=q,\qquad q=1,\dots,m_\ell,
\qquad
\pi(2q-1)=m_\ell+q,\qquad q=1,\dots,2^\ell.
\]
Let $U_\pi\in\mathbb{C}^{N\times N}$ be the corresponding permutation unitary defined by $U_\pi\ket{j}=\ket{\pi(j)}$ for
$1\le j\le m_{\ell+1}$ and $U_\pi\ket{j}=\ket{j}$ for $j=0$ and $j>m_{\ell+1}$.
Let $P_e\in\mathbb{R}^{N\times N}$ be the standard embedding of $P$, namely
\[
(P_e)_{j,q}=P_{j,q}\quad (1\le j\le m_{\ell+1},\ 1\le q\le m_\ell),\qquad (P_e)_{j,q}=0\ \text{otherwise}.
\]
Define
\[
P_{\mathrm{reorder}}:=\Pi_{\mathrm{out}}\,U_\pi\,P_e\,\Pi_{\mathrm{in}}^\dagger\in\mathbb{R}^{m_{\ell+1}\times m_\ell}.
\]
Then, by the definition of $P$,
\[
P_{\mathrm{reorder}}
=
\begin{bmatrix}
I_{m_\ell\times m_\ell}\\
Q_0
\end{bmatrix},
\qquad
Q_0=\frac12\,Q_1,
\qquad
Q_1 = \begin{bmatrix}
1 &     &     \\
1 & \ddots   &     \\
  & \ddots   & 1   \\
  &    & 1
\end{bmatrix} \in\mathbb{R}^{2^\ell\times m_\ell},
\]
where $Q_1\in\mathbb{R}^{2^\ell\times m_\ell}$ is the bidiagonal matrix with $1$ on the main diagonal and the first subdiagonal.
In particular, $Q_1$ has at most two nonzeros per row and per column and satisfies
$\max_{i,j}|(Q_1)_{ij}|\le 1$.

By Lemma~\ref{lem:sparse_access_be_rect},  matrix $Q_1$ admits an exact $(2,0)$-generalized block-encoding.
Scaling gives an exact $(1,0)$-generalized block-encoding of $Q_0=\frac12 Q_1$.
The identity block $I_{m_\ell\times m_\ell}$ admits an exact $(1,0)$-generalized block-encoding with respect to $(\Pi_{\mathrm{in}},\Pi_{\mathrm{in}})$.
By Remark~\ref{remark:Vertical stacking}, the vertical concatenation
$\bigl[\begin{smallmatrix}I_{m_\ell\times m_\ell}\\ Q_0\end{smallmatrix}\bigr]$ admits an exact generalized block-encoding with normalization
$\sqrt{1^2+1^2}=\sqrt{2}$. Hence there exists a unitary $U_{\mathrm{re}}$ such that
\[
\Pi_{\mathrm{out}}\,U_{\mathrm{re}}\,\Pi_{\mathrm{in}}^\dagger=\frac{1}{\sqrt{2}}\,P_{\mathrm{reorder}}.
\]

Finally, define $U_P:=U_\pi^\dagger U_{\mathrm{re}}$. Since $U_\pi$ maps $\mathcal{H}_{\mathrm{out}}$ onto itself, the output
projector satisfies $\Pi_{\mathrm{out}}U_\pi^\dagger=U_\pi^\dagger\Pi_{\mathrm{out}}$. Therefore,
\[
\Pi_{\mathrm{out}}\,U_P\,\Pi_{\mathrm{in}}^\dagger
=
\Pi_{\mathrm{out}}\,U_\pi^\dagger U_{\mathrm{re}}\,\Pi_{\mathrm{in}}^\dagger
=
U_\pi^\dagger\Big(\Pi_{\mathrm{out}}\,U_{\mathrm{re}}\,\Pi_{\mathrm{in}}^\dagger\Big)
=
\frac{1}{\sqrt{2}}\,U_\pi^\dagger P_{\mathrm{reorder}}
=
\frac{1}{\sqrt{2}}\,P.
\]
\end{proof}

\begin{lemma}\label{lem:BE_E_tensor_T}
	Let $\ell<L$ and $r:=2^{L-\ell}$. Let $E_{\ell \to L}^{(k)}$ be defined by \eqref{eq:E-tensor} and let
	\[
	E_{\ell \to L}=\mathrm{diag}\bigl(E_{\ell \to L}^{(1)},\cdots,E_{\ell \to L}^{(d)}\bigr),
	\qquad
	T_{\ell \to L}:=r^{-d/2}E_{\ell \to L}.
	\]
	Assume that the one-step 1D prolongation $P_{\ell\to \ell+1}^{1d}$ admits an exact generalized
	$(\sqrt{2},0)$-block-encoding in the sense of Definition~\ref{def:gen_blockencoding}.
	Then $T_{\ell \to L}$ admits an exact generalized block-encoding with normalization
	\[
	\alpha_T=\sqrt{d}.
	\]
\end{lemma}

\begin{proof}
	We combine generalized block-encodings using Lemma~\ref{lem:gen_arithmeticBE}.

	\textbf{Step 1: Block-encoding of $E_{\ell \to L}^{(1d)}$.}
	Recall $E_{\ell \to L}^{(1d)}=I_{\ell}\otimes \mathbf{1}_r$.
	The columns of $E_{\ell \to L}^{(1d)}$ are mutually orthogonal and each has Euclidean norm $\sqrt r$.
	Hence
	$\|E_{\ell \to L}^{(1d)}\|=\sqrt r.$
	Moreover, $E_{\ell \to L}^{(1d)}$ admits an exact generalized $(\sqrt r,0)$-block-encoding:
	for each column index $q$, prepare the normalized column state
	$\frac{1}{\sqrt r}\sum_{t=0}^{r-1}|rq+t\rangle$ on the output register, and use the standard
	row/column state-preparation construction for projected unitary encodings \cite{gilyen2019quantum}.
	Therefore,
	\begin{equation}\label{eq:norm_E_1d}
	\gamma(E_{\ell \to L}^{(1d)})=\sqrt r.
	\end{equation}
	
	\textbf{Step 2: Block-encoding of $P_\ell^{1d}$.}
	Let $P_\ell^{1d}$ denote the multistep 1D prolongation from level $\ell$ to level $L$,
	\[
	P_\ell^{1d}=P_{L-1\to L}^{1d}\cdots P_{\ell\to \ell+1}^{1d}.
	\]
	By Lemma~\ref{lem:BE_P_one_step_sqrt2}, each one-step prolongation admits an exact generalized
	$(\sqrt2,0)$-block-encoding. Repeatedly applying Lemma~\ref{lem:gen_arithmeticBE}(2) yields an exact generalized
	block-encoding of $P_\ell^{1d}$ with normalization
	\begin{equation}\label{eq:alpha_Pell_1d_gen}
		\gamma(P_\ell^{1d})=(\sqrt2)^{L-\ell}=\sqrt{2^{L-\ell}}=\sqrt r.
	\end{equation}
	
	\textbf{Step 3: Block-encoding of each tensor block $E_{\ell \to L}^{(k)}$.}
	By \eqref{eq:E-tensor}, for each $k\in\{1,\cdots,d\}$,
	\[
	E_{\ell \to L}^{(k)} =
	\Big( \bigotimes_{m=1}^{k-1} P_\ell^{1d} \Big) \otimes E_{\ell \to L}^{(1d)} \otimes
	\Big( \bigotimes_{m=k+1}^{d} P_\ell^{1d} \Big).
	\]
	Using Lemma~\ref{lem:gen_arithmeticBE}(3) repeatedly together with \eqref{eq:norm_E_1d} and
	\eqref{eq:alpha_Pell_1d_gen}, we obtain an exact generalized block-encoding of $E_{\ell \to L}^{(k)}$ with normalization
	\begin{equation}\label{eq:alpha_Ek_gen}
		\gamma_k
		=(\gamma(P_\ell^{1d}))^{d-1}\cdot \gamma(E_{\ell \to L}^{(1d)})
		=(\sqrt r)^{d-1}\cdot \sqrt r
		=r^{d/2}.
	\end{equation}
	
	\textbf{Step 4: Block-diagonal assembly.}
	Let $U_k$ be an exact generalized $(\gamma_k,0)$-block-encoding of $E_{\ell \to L}^{(k)}$ for $k=1,\cdots,d$,
	with respect to input/output projectors $(\Pi_{\mathrm{in}}^{(k)},\Pi_{\mathrm{out}}^{(k)})$.
	Prepare on a selector register $\mathsf{K}$ the state
	\[
	|\beta\rangle := \frac{1}{\gamma_E}\sum_{k=1}^d \gamma_k\,|k\rangle,
	\qquad
	\gamma_E:=\sqrt{\sum_{k=1}^d \gamma_k^2} = \sqrt{d}\, r^{d/2}.
	\]
	Define the unitary
	\[
	U_E := (U_{\beta}^\dagger\otimes I)\Bigl(\sum_{k=1}^d |k\rangle\langle k|_{\mathsf{K}}\otimes U_k\Bigr)(U_{\beta}\otimes I),
	\]
	where $U_{\beta}$ is any unitary satisfying $U_{\beta}|0\rangle=|\beta\rangle$.
	Let $\Pi_{\mathrm{in}}:=\bigoplus_{k=1}^d \Pi_{\mathrm{in}}^{(k)}$ and $\Pi_{\mathrm{out}}:=\bigoplus_{k=1}^d \Pi_{\mathrm{out}}^{(k)}$.
	Then $U_E$ is an exact generalized $(\gamma_E,0)$-block-encoding of
	\[
	E_{\ell \to L}=\mathrm{diag}\bigl(E_{\ell \to L}^{(1)},\cdots,E_{\ell \to L}^{(d)}\bigr),
	\qquad\text{i.e.,}\qquad
	E_{\ell \to L}=\gamma_E\,\Pi_{\mathrm{out}} U_E \Pi_{\mathrm{in}}^\dagger.
	\]
	Consequently, since $T_{\ell \to L}=r^{-d/2}E_{\ell \to L}$, the same unitary $U_E$ together with the same projectors
	$\Pi_{\mathrm{in}},\Pi_{\mathrm{out}}$ provides an exact generalized block-encoding of $T_{\ell \to L}$ with normalization
	$\alpha_T=r^{-d/2}\gamma_E=\sqrt d$.
\end{proof}
	
	\begin{lemma}[Block-encoding of $C_LS_\ell$]\label{lem:BE_CLSell}
		Assume that $T_{\ell \to L}$ admits an exact generalized $(\sqrt{d},0)$-block-encoding as established in
		Lemma~\ref{lem:BE_E_tensor_T}. Then $C_LS_\ell$ admits an exact generalized block-encoding with normalization
		$\alpha_{\ell}= \mathcal{O}(d)$.
	\end{lemma}
	
	\begin{proof}
		Throughout we use the generalized block-encoding calculus in Lemma~\ref{lem:gen_arithmeticBE}.
		
		Let $D_\ell^{1d}$ be the one-dimensional difference matrix defined in \eqref{eq:def DL}.
		By inspection of \eqref{eq:def DL}, $D_\ell^{1d}$ is $s_r$-row-sparse and $s_c$-column-sparse with $s_r=s_c=2$ and $\max_{i,j}|(D_\ell^{1d})_{ij}|\le 1$.
		By Lemma~\ref{lem:sparse_access_be_rect}, $D_\ell^{1d}$ admits an exact generalized $(\sqrt{s_rs_c},0)=(2,0)$-block-encoding, i.e.
		\begin{equation}\label{eq:gamma_D1d}
			\gamma (D_\ell^{1d})\le 2.
		\end{equation}
		
		For each $k\in\{1,\cdots,d\}$, let $D_\ell^{(k)}$ be the $k$-th directional difference operator defined in \eqref{eq:def DL}.
		Since $D_\ell^{(k)}$ is obtained from $D_\ell^{1d}$ by tensoring with identities, Lemma~\ref{lem:gen_arithmeticBE}(3) implies
		\begin{equation}\label{eq:gamma_Dk}
			\gamma(D_\ell^{(k)})=\gamma(D_\ell^{1d})\le 2.
		\end{equation}
		Let $D_\ell$ be the stacked derivative operator defined in \eqref{eq:DL_stack}.
		Applying the multi-block vertical stacking rule in Remark~\ref{remark:Vertical stacking}, we obtain an exact generalized
		block-encoding of $D_\ell$ with normalization
		\begin{equation}\label{eq:gamma_Dell}
			\gamma(D_\ell)=\sqrt{\sum_{k=1}^d \gamma(D_\ell^{(k)})^2}\le \sqrt{d}\cdot 2 = 2\sqrt{d}.
		\end{equation}

		Using the identity
		$C_LS_\ell = T_{\ell \to L}\,D_\ell,$ in Lemma \ref{thm:CS=TD},
		together with Lemma~\ref{lem:gen_arithmeticBE}(2), we obtain an exact generalized block-encoding of
		$C_LS_\ell$ with normalization
		\[
		\alpha_\ell \;=\;\gamma(T_{\ell \to L})\,\gamma(D_\ell)
		\;\le\; (\sqrt{d})\cdot (2\sqrt{d})
		\;=\; 2d.
		\]
		Therefore $\alpha_\ell=\mathcal{O}(d)$, as claimed.
	\end{proof}

\begin{theorem}\label{thm:BE_STAS}
	Let $A=C_L^\top C_L$ and let $S=[S_0, S_1, \cdots, S_L]$ be the BPX scaling operator.  The preconditioned matrix $A_S=(C_LS)^\top(C_LS)$ admits an exact (square) block-encoding with normalization
		\[
		\alpha_{A_S}:=\alpha_{CS}^2=\mathcal{O}\!\bigl(d^2(L+1)\bigr).
		\]
\end{theorem}

\begin{proof}
	
	By Lemma~\ref{lem:BE_CLSell}, for each $\ell\in\{0,\cdots,L\}$ there exists an exact generalized block-encoding $U_\ell$
	of $C_LS_\ell$ with normalization $\alpha_\ell=\mathcal{O}(d)$ such that
	\[
	C_LS_\ell=\alpha_\ell\,\Pi_{\mathrm{out}}^{(\ell)}\,U_\ell\,\bigl(\Pi_{\mathrm{in}}^{(\ell)}\bigr)^\dagger.
	\]
	
	Define the horizontal concatenation
	$
	C_LS=[C_LS_0,C_LS_1,\cdots, C_LS_L].
	$
	Lemma~\ref{lem:gen_arithmeticBE}(4) and its multi-block extension yield an exact generalized block-encoding $U_{CS}$ of $C_LS$
	with normalization
	\[
	\alpha_{CS}=\sqrt{\sum_{\ell=0}^L \alpha_\ell^2} =\mathcal{O}\!\bigl(d\sqrt{L+1}\bigr).
	\]
	
	Lemma~\ref{lem:gen_arithmeticBE}(1) implies that $U_{CS}^\dagger$ is an exact generalized block-encoding of $(C_LS)^\top$
	with the same normalization $\alpha_{CS}$.
	Applying Lemma~\ref{lem:gen_arithmeticBE}(2) to $(C_LS)^\top$ and $C_LS$ yields an exact block-encoding of
	\[
	A_S=(C_LS)^\top(C_LS)
	\]
	with normalization $\alpha_{A_S}=\alpha_{CS}^2$.
	Substituting the bound on $\alpha_{CS}$ gives $\alpha_{A_S}=\mathcal{O}\!\bigl(d^2(L+1)\bigr)$.
\end{proof}

}

\section{Implementation of the Schr\"odingerization based method} \label{sec:implementation}

In this section, we present the implementation of the Hamiltonian simulation \eqref{eq:hamiltonian} arising from the BPX
preconditioning framework. This includes the preparation of the initial quantum state, the realization of the unitary
evolution $\mathcal{U}(T)=e^{-iHT}$, and the computation of a linear quantity.

{\subsection{Preparation of the input state }}\label{subsec:prep_STb}

We begin by preparing the initial quantum state $\bm{W}_h(0)$ corresponding to \eqref{eq:hamiltonian}, where
\[
\bm{W}_h(0) =\bm\psi\otimes \bm{z}_f(0),\qquad
\bm\psi=\bigl[\psi(p_0),\cdots,\psi(p_{N_p-1})\bigr]^\top,\qquad
\bm{z}_f(0)=\ket{1}\otimes \bigl(T\,{\bm b}_S\bigr),
\]
where $\bm{b}_S = S^{\top}\bm{b}$.
We assume the following state-preparation unitaries:
\begin{itemize}
  \item Let $\mathsf{P}$ be an $n_p$-qubit register and let $O_\psi$ be a unitary such that
\begin{equation}\label{eq:Opsicorrect}
	O_{\psi}\bigl(|0^{n_p}\rangle_{\mathsf{P}}\bigr)=|\psi\rangle_{\mathsf{P}}
	:=\frac{1}{C_e}\sum_{k=0}^{N_p-1}\psi(p_k)\,|k\rangle_{\mathsf{P}},
	\qquad
	C_e=\|\bm\psi\|.
\end{equation}
  \item Let $\mathsf{B}$ be an $n_b$-qubit register and let $O_{Sb}$ be a unitary such that
\begin{equation}\label{eq:OSbcorrect}
	O_{Sb}\bigl(|0^{n_b}\rangle_{\mathsf{B}}\bigr)=|b_S\rangle_{\mathsf{B}}
	:=\frac{1}{\|{\bm b}_S\|}\sum_{j}({\bm b}_S)_j\,|j\rangle_{\mathsf{B}}.
\end{equation}
\end{itemize}

Let $\mathsf{a}$ be an $a$-qubit ancilla register and let $\mathsf{f}$ be a one-qubit flag register. Using the above two state-preparation oracles, we can prepare $\bm{w}_h(0)$ by defining $O_{\mathrm{prep}}$ on $\mathsf{a}\otimes\mathsf{P}\otimes\mathsf{f}\otimes\mathsf{B}$ as
\begin{equation}\label{eq:Oprep_correct}
	O_{\mathrm{prep}}\Bigl(|0^{a}\rangle_{\mathsf{a}}\otimes|0^{n_p}\rangle_{\mathsf{P}}\otimes (| 0\rangle_{\mathsf{f}}\otimes|0^{n_b}\rangle_{\mathsf{B}})\Bigr)
	=
	|0^{a}\rangle_{\mathsf{a}}\otimes|\psi\rangle_{\mathsf{P}}\otimes (|1\rangle_{\mathsf{f}}\otimes|b_S\rangle_{\mathsf{B}}).
\end{equation}
where $|b_S\rangle_{\mathsf{B}}$ is the normalization state of $\bm{b}_S$.
In amplitude-encoding form, the right-hand side equals the normalized state associated with $\bm{w}_h(0)$ up to the known
scaling factors $C_e$ and $T\|\bm{b}_S\|$. Hence we may write
\[
O_{\mathrm{prep}}\Bigl(|0^{a}\rangle_{\mathsf{a}}\otimes|0^{n_p}\rangle_{\mathsf{P}}\otimes|0\rangle_{\mathsf{f}}\otimes|0^{n_b}\rangle_{\mathsf{B}}\Bigr)
=
|0^{a}\rangle_{\mathsf{a}}\otimes \frac{1}{\eta_0}\,\bm{W}_h(0)\;+\;|\bot\rangle,
\qquad
\eta_0:=C_e\,T\,\|{\bm b}_S\|,
\]
where $|\bot\rangle$ is a state orthogonal to $|0^{a}\rangle_{\mathsf{a}}$ on the ancilla register.
For simplicity, we denote the above procedure as
\begin{equation}\label{Oprep}
	|0^{a}\rangle_{\mathsf{a}}\otimes|0^{n_p}\rangle_{\mathsf{P}}\otimes|0\rangle_{\mathsf{f}}\otimes|0^{n_b}\rangle_{\mathsf{B}}
	\ \xrightarrow{\ O_{\mathrm{prep}}\ }\
	|0^{a}\rangle_{\mathsf{a}}\otimes \frac{1}{\eta_0}\,\bm{W}_h(0)\;+\;|\bot\rangle.
\end{equation}

\subsection{Implementation of the evolution operator  $\mathcal{U}(T)=e^{-iHT}$}

One can express the evolution operator $\mathcal{U}(T)=e^{-\mathrm{i}HT}$ as a select unitary acting on the Fourier-mode
register $\mathsf{P}$:
\begin{equation}\label{eq:select_Vk}
	\mathcal{U}(T)
	=\sum_{k=0}^{N_p-1} |k\rangle\langle k|_{\mathsf{P}}\otimes
	\exp\bigl(-\mathrm{i}(\mu_k H_1-H_2)T\bigr)
	=: \sum_{k=0}^{N_p-1} |k\rangle\langle k|_{\mathsf{P}}\otimes V_k(T),
\end{equation}
where $H_{\mu_k}:=\mu_k H_1-H_2$ and $V_k(T)=e^{-\mathrm{i}H_{\mu_k}T}$.

According to Theorem \ref{thm:BE_STAS},  $H_1$ and $H_2$ admit exact block-encodings $U_{H_1}$ and $U_{H_2}$
on an ancilla register $\mathsf{a}$ and a system register
such that
\begin{equation}\label{eq:BE_Hi}
	\bigl(\langle 0^{a}\!|_{\mathsf{a}}\otimes I\bigr)\,U_{H_i}\,\bigl(|0^{a}\rangle_{\mathsf{a}}\otimes I\bigr)
	=\frac{H_i}{\alpha_i^H},
	\qquad i\in\{1,2\},
\end{equation}
with normalization factors
\[
\alpha_1^H= \alpha_{A_S}+\frac{1}{2T},
\qquad
\alpha_2^H=\frac{1}{T}.
\]
The associated gate complexity is $\mathcal{O}(T_{A_S})$ up to a constant overhead, where $T_{A_S}$ denotes the cost of implementing the block-encoding of $A_S$.
%
Let
\begin{equation}\label{alphaf}
\mu_{\max}:=\max_{0\le k\le N_p-1}|\mu_k|,
\qquad
\alpha_f:=\alpha_1^H\,\mu_{\max}+\alpha_2^H.
\end{equation}

Following the construction in Section~4.2.1 of \cite{ACL2023LCH2}, there exists a unitary oracle
$\mathrm{HAM\!-\!T}_{H_\mu}$ such that
\begin{equation}\label{eq:HAMT}
	\bigl(\langle 0^{a}\!|_{\mathsf{a}}\otimes I\bigr)\,
	\mathrm{HAM\!-\!T}_{H_\mu}\,
	\bigl(|0^{a}\rangle_{\mathsf{a}}\otimes I\bigr)
	=\sum_{k=0}^{N_p-1} |k\rangle\langle k|_{\mathsf{P}}\otimes \frac{H_{\mu_k}}{\alpha_f}.
\end{equation}
This oracle uses $\mathcal{O}(1)$ queries to $U_{H_1}$ and $U_{H_2}$ (and their inverses), and $\mathrm{poly}(a)$
additional elementary gates.
With the block-encoding \eqref{eq:HAMT}, we apply quantum singular value transformation to implement a select unitary
\begin{equation}\label{eq:SEL_U}
	\mathrm{SEL}_{\mathcal{U}}:=\sum_{k=0}^{N_p-1} |k\rangle\langle k|_{\mathsf{P}}\otimes V_k^a(T),
\end{equation}
where each $V_k^a(T)$ approximates $V_k(T)=e^{-\mathrm{i}H_{\mu_k}T}$ to the prescribed accuracy of $\delta$.

We refer to \cite{ACL2023LCH2} and the implementation details summarized in \cite{JLMY2025qSmooth} for the explicit QSVT
polynomial construction and the resulting complexity bounds. To this end, we introduce the following results.

\begin{lemma}[Theorem 2.1 of \cite{JLMY2025qSmooth}] \label{lem:errw}
Let $\bb{w}(t,p)$ be the exact solution to \eqref{eq:shcro w}, and let $\bm{W}_h(t)$ denote the solution of the discrete problem \eqref{eq:hamiltonian}. Assume that $\psi \in H^r(\mathbb{R})$ and decays exponentially on $\mathbb{R}$.
Suppose the mesh size $\triangle p$ satisfies
\begin{equation}\label{mumax}
 (\triangle p)^{-1} \simeq \mu_{\max} \simeq \pi (1/\epsilon)^{1/r} \|\psi^{(r)}\|_{L^2((-R,R))}^{1/r},
\end{equation}
where  $R$ is chosen according to \eqref{eq: L,R,criterion}.
Then the following error estimate holds:
\begin{equation}\label{errw}
\| \bb{w}(T,p) - \bb{w}_h(T,p) \|_{L^2((-R,R))} \lesssim  \epsilon \|\bm{z}_f(0)\|,
\end{equation}
where $\bb{w}_h$ is the continuous reconstruction of $\bm{W}_h$, given by
\[
\bb{w}_h(t,p) = \sum_{l=0}^{N_p-1} \tilde{\bb{w}}_{l,h}(t)\,\phi_l(p),
\qquad
\tilde{\bb{w}}_{l,h}(t) = \frac{1}{N_p} \sum_{k=0}^{N_p-1} \big((\bra{k}\otimes I)\bm{W}_h\big) \,
\e^{ - \i \mu_l (p_k+R)}.
\]
\end{lemma}

\begin{lemma}[{ Implementation of $\mathcal{U}(T)$}]\label{lem:U_sim}
Under the condition of Lemma \ref{lem:errw}, we assume that the function $\psi\in H^r(\mathbb{R})$ in the initial data of \eqref{initw} decays exponentially on $\mathbb{R}$ and satisfies
\[\|\psi^{(r)}\|_{L^2((-R,R))}^{1/r} \le C r, \quad \beta \in (0,1], \]
where $r \simeq \log(1/\epsilon)$ and $C$ is a constant independent of $\epsilon$.
Let $\delta \in(0,1)$ be a target accuracy parameter for each $V_k^a(T)$ in \eqref{eq:SEL_U}.
There exists a quantum algorithm $V^a$ that, given the state-preparation
	oracle $O_{\mathrm{prep}}$ in \eqref{Oprep}, produces
	\[
	|0\rangle_{\mathsf{a}}\,|0\rangle_{\mathsf{b}}
	\ \xrightarrow{\ V^a\ }\
	\frac{1}{\eta_0}\,|0\rangle_{\mathsf{a}}\otimes \bm{W}_h^a(T) + |\bot\rangle,
	\]
	where $\bm{W}_h^a(T)=\mathcal{U}^a(T)\bm{W}_h(0)$ is an approximation of $\bm{W}_h(T)$ in \eqref{eq:hamiltonian}, satisfying
	\[
	\|\bm{W}(T)-\bm{W}_h^a(T)\|\ \lesssim\  \mu_{\max}^{1/2} ( \delta + \epsilon) \|\bm{z}_f(0)\|,
	\]
with $\bm{W}(t) = \sum_{ki} w_i(t,p_k) \ket{k,i}$ being the exact solution.
	Moreover, $V^a$ uses
\begin{equation}\label{complexityU}
	\mathcal{O} \Big(\alpha_f\,T\ \log\!\frac{\|\bm{z}_f(0)\|}{\delta\,\|\bm{z}(T)\|}\Big)
\end{equation}
	queries to $\mathrm{HAM\!-\!T}_{H_\mu}$, and $\mathcal{O}(1)$ queries to $O_{\mathrm{prep}}$, where $\alpha_f$ is defined in \eqref{alphaf}.
\end{lemma}
\begin{proof}
See the proof of Theorem 2.2 in \cite{JLMY2025qSmooth}.
\end{proof}

{
\subsection{Time complexity of computing a linear quantity}\label{subsec:measurement}

The Schr\"odingerization-based evolution produces the extended state $\bm{W}_h(T)$, whose Fourier-mode components are indexed
by $p_k$. In particular, the coefficient vector of interest $\bm{z}(T)$ is not returned directly,
where $\bm{z}(T)$ is the solution of the preconditioned system \eqref{eq:ODE square root}.
Instead, the algorithm
outputs a frequency-weighted quantity $\bm{z}_f(T)$ supported on the admissible modes $p_k\ge p_\Diamond$, and $\bm{z}(T)$ is
recovered from $\bm{z}_f(T)$ by undoing the exponential weight.

 { Since \(\bm x \approx \bm u(T) = S \bm z(T)\),  we focus on linear observables of the form \(\bm c^\top \bm u(T)\), and more generally, expectation-type measurements that reduce to inner products in the BPX coefficient space.  In particular,
$$
\bm c^\top \bm u(T) = (S^\top \bm c)^\top \bm z(T).
$$
Such linear quantities are natural in PDE-based models. They represent, for example, pointwise evaluation at a sensor location, spatial averages over a region, fluxes across a boundary segment, or the output of a detector with a prescribed response profile encoded by \(\bm c\). This measurement model is aligned with the Schr\"odingerization recovery step, so the target quantity can be estimated from overlaps and expectation values without explicitly outputting the full solution vector.}

\subsubsection{Coherent preparation of the target state}
Let $\mathsf{P}$ be the $n_p$-qubit mode register with computational basis
$\{|k\rangle\}_{k=0}^{N_p-1}$, and let $\mathsf{Z}$ be the register holding the BPX coefficient state.
For admissible modes $p_k\ge p_\Diamond$, we have the relation
\begin{equation}\label{eq:zf_w_relation}
	\bm z_f(T;k)=e^{p_k}\,\bm w(T;k),
	\qquad p_k\ge p_\Diamond,
\end{equation}
where $\bm w(T;k)$ denotes the corresponding component of $\bm W_h(T)$.
Thus, the target coefficient vector $\bm z(T)$ is recovered from the simulated extended state by selecting the admissible branch
and undoing the exponential weight.

As analyzed in \cite{JLMY2025qSmooth} (see Eq.~(2.18) there), the success probability for obtaining the normalized target state
\[
|\bm z(T)\rangle:=\frac{\bm z(T)}{\|\bm z(T)\|}
\]
from the output of the simulation is
\begin{equation}\label{eq:Pz}
	\mathrm P_{\mathrm r}(\bm z)
	=
	\frac{C_{e0}^2}{C_e^2}\,
	\frac{\|\bm z(T)\|^2}{\|\bm z_f(0)\|^2}
	=
	\frac{C_{e0}^2}{C_e^2}\,
	\frac{\|\bm z(T)\|^2}{(T\|{\bm b}_S\|)^2},
\end{equation}
where
\[
C_{e0}=\Bigl(\sum_{p_k\ge p_\Diamond}\psi(p_k)^2\Bigr)^{1/2},
\qquad
C_e=\Bigl(\sum_{k=0}^{N_p-1}\psi(p_k)^2\Bigr)^{1/2},
\qquad
p_\Diamond=\frac12.
\]
Moreover, the ratio $C_{e0}^2/C_e^2$ is bounded above and below by positive constants independent of the discretization parameters. In particular,
\[
\frac{C_{e0}^2}{C_e^2}=\Theta(1).
\]
For example, for $\psi(p)=e^{-|p|}$, this ratio is approximately $1/2$ when the discretization in $p$ is sufficiently fine.

We emphasize that no intermediate measurement is performed in this recovery procedure. Instead, the preparation of 
$|\bm z(T)\rangle$ is kept fully coherent and appears as the success branch of the overall state-preparation circuit.
The factor
$\mathrm P_{\mathrm r}(\bm z)$ will be accounted for later through amplitude amplification/amplitude estimation in the final
complexity bound.

\begin{remark}
If no preconditioner is used, then $S^\top=S=I$, so that $A_S=A$ and the target state $|z(T)\rangle$ coincides with the solution state $|u(T)\rangle$, which approximates $|x\rangle$. In this case, the block-encoding normalization factor becomes $\alpha_f=\|A\|\mu_{\max}=\lambda_{\max}(A)\mu_{\max}$. Combining Lemma \ref{lem:U_sim} with the success-probability factor $\mathrm P_{\mathrm r}(z)$ and the choice
\[
T=\Theta\Big(\frac{1}{\lambda_{\min}(A)}\log\frac1\varepsilon\Big),
\]
the overall oracle complexity reduces to
\[
\mathcal O\Big(\kappa_A^2\,\mu_{\max}\,\mathrm{polylog}\Big(\frac{\kappa_A}{\varepsilon}\Big)\Big).
\]
Moreover, if $\psi$ is sufficiently smooth so that $\mu_{\max}=\mathcal O(\log(1/\varepsilon))$, this further simplifies to
\[
\mathcal O\Big(\kappa_A^2\,\mathrm{polylog}\Big(\frac{\kappa_A}{\varepsilon}\Big)\Big).
\]
Thus, in the absence of preconditioning, our framework recovers the usual $\kappa_A^2$-type dependence up to polylogarithmic factors.
\end{remark}

\subsubsection{Linear measurements in the physical space.}

We do not output $\bm{u}(T)$ in \eqref{eq:ODE} explicitly. As analyzed in \cite{DP24}, the symmetric preconditioning makes it impossible to measure quantities of interest of the form $\langle u |M| u\rangle$. Instead, we focus on the absolute value of a linear quantity
\[
\Upsilon:=|\bm c^\top \bm u(T)|.
\]
Since $\bm{u}(T)=S\,\bm{z}(T)$, any linear quantities
$\bm{c}^\top \bm{u}(T)$ can be written as
\begin{equation}\label{eq:ctu_reduce}
	\bm{c}^\top \bm{u}(T) = \bm{c}^\top S\,\bm{z}(T) = (S^\top \bm{c})^\top \bm{z}(T) = \bm{c}_S^\top \bm{z}(T).
\end{equation}
Thus, it suffices to estimate the inner product between $\bm{z}(T)$ and the scaled measurement vector
$\bm{c}_S:=S^\top \bm{c}$.

In line with our algorithmic focus, we assume the availability of a state-preparation oracle for $\bm{c}_S$:
there exists a unitary $O_{Sc}$ on a register $\mathsf{C}$ such that
\begin{equation}\label{eq:OSc_correct}
	O_{Sc}\bigl(|0^{n_c}\rangle_{\mathsf{C}}\bigr)=|c_S\rangle_{\mathsf{C}}
	:=\frac{1}{\|\bm{c}_S\|}\sum_{j}(\bm{c}_S)_j\,|j\rangle_{\mathsf{C}}.
\end{equation}
On the other hand, the target state $|z(T)\rangle=\bm z(T)/\|\bm z(T)\|$ is not prepared directly from the initial state by a unitary acting only on the coefficient register. Rather, it is obtained coherently from the simulated extended state by extracting the component corresponding to the target mode. The corresponding success probability is precisely $\mathrm P_{\mathrm r}(\bm z)$ in \eqref{eq:Pz}. After amplitude amplification, this yields an effective state-preparation routine for $|z(T)\rangle$ with constant success probability, up to the stated approximation error.

For the absolute value of the linear quantity \eqref{eq:ctu_reduce}, in quantum setting, we need to consider the overlap
\[
\mu:=|\langle c_S\mid z(T)\rangle|,
\]
which can be estimated by a standard quantum inner-product estimation routine.
Since
\[
\Upsilon
=
|\bm c^\top \bm u(T)|
=
|\bm c_S^\top \bm z(T)|
=
\|\bm c_S\|\,\|\bm z(T)\|\,\mu,
\]
an approximation $\widetilde{\mu}$ of $\mu$ gives rise to the estimator
\[
\widetilde{\Upsilon}
:=
\|\bm c_S\|\,\|\bm z(T)\|\,\widetilde{\mu},
\]
which satisfies
\[
|\widetilde{\Upsilon}-\Upsilon|
\le
\|\bm c_S\|\,\|\bm z(T)\|\,|\widetilde{\mu}-\mu|.
\]
Hence, to achieve additive accuracy
\[
|\widetilde{\Upsilon}-\Upsilon|\le \mathrm{tol},
\]
it suffices to estimate $\mu$ by
\[
|\widetilde{\mu}-\mu|
\le
\frac{\mathrm{tol}}{\|\bm c_S\|\,\|\bm z(T)\|}.
\]
Therefore, it is enough to use a standard quantum inner-product estimation routine that approximates
\[
\mu=|\langle c_S\mid z(T)\rangle|
\]
directly to additive error
\begin{equation}\label{eq:epsprime_readout}
\varepsilon'
:=
\frac{\mathrm{tol}}{\|\bm c_S\|\,\|\bm z(T)\|}.
\end{equation}

By standard amplitude-estimation arguments, estimating $\mu$ to additive error $\varepsilon'$ requires
\[
\mathcal O\Big(\frac{1}{\varepsilon'}\Big)
=
\mathcal O\Big(
\frac{\|\bm c_S\|\,\|\bm z(T)\|}{\mathrm{tol}}
\Big)
\]
calls to the overlap-estimation procedure. Each such invocation uses a preparation of $|z(T)\rangle$. Since $|z(T)\rangle$ is obtained as the good branch of a coherent recovery procedure with success weight $\mathrm P_{\mathrm r}(\bm z)$, promoting this branch to constant success probability by amplitude amplification introduces an additional factor $\mathcal O(\mathrm P_{\mathrm r}(\bm z)^{-1/2})$. Hence the total number of underlying calls to the coherent recovery routine and the inner-product estimation procedure is
\begin{equation}\label{eq:g_total_new}
g
=
\mathcal O\Big(
\mathrm P_{\mathrm r}(\bm z)^{-1/2}\,\varepsilon'^{-1}
\Big)
=
\mathcal O\Big(
\frac{T\|{\bm b}_S\|\,\|\bm c_S\|}{\mathrm{tol}}
\Big),
\end{equation}
where we used \eqref{eq:Pz} and absorbed the factor $C_e/C_{e0}$ into the implicit constant.

\begin{theorem}
Assume the conditions of Lemma~\ref{lem:errw} and Lemma~\ref{lem:U_sim}. Let $\varepsilon\in(0,1)$ be the target accuracy
parameter and let $\mathrm{tol}=\mathcal{O}(\varepsilon)$ be the target precision for
$\Upsilon=|\bm c^\top \bm u(T)|$. There exists a quantum algorithm that outputs an estimate $\widetilde{\Upsilon}$ satisfying
$|\widetilde{\Upsilon}-\Upsilon|\le \mathrm{tol}$.

The algorithm consists of a coherent recovery stage that produces $|z(T)\rangle=\bm z(T)/\|\bm z(T)\|$ and a readout stage
that estimates $\mu:=|\langle c_S|z(T)\rangle|$ with $|c_S\rangle=\bm c_S/\|\bm c_S\|$, $\bm c_S=S^\top\bm c$.

\begin{itemize}
\item One invocation of the underlying simulation-and-recovery subroutine uses
\[
\mathcal{O}\Big(
\mathrm{poly}(d)\,
\log\frac{1}{\varepsilon}\,
\log\!\frac{\|\bm c_S\|}{\varepsilon\,\|\bm z(T)\|}
\Big)
\]
queries to the block-encoding oracle $U_{A_S}$ of $S^\top A S$, and $\mathcal{O}(1)$ queries to $O_{\mathrm{prep}}$ in \eqref{Oprep}.

\item The readout stage uses $\mathcal{O}(1)$ queries to $O_{Sc}$ per repetition. To achieve additive error
$|\widetilde{\Upsilon}-\Upsilon|\le \mathrm{tol}=\mathcal{O}(\varepsilon)$, one uses
\[
g=\mathcal O\!\left(\frac{\|{\bm b}_S\|\,\|\bm c_S\|}{\varepsilon}\right)
\]
invocations; consequently, the total numbers of queries to $O_{Sc}$ and $O_{\mathrm{prep}}$ are both $\mathcal O(g)$.
\end{itemize}

Therefore, the total number of queries to $U_{A_S}$ is
\[
\mathcal{O}\Big(
g\,
\mathrm{poly}(d)\,
\log\frac{1}{\varepsilon}\,
\log\!\frac{\|\bm c_S\|}{\varepsilon \,\|\bm z(T)\|}
\Big).
\]
\end{theorem}
\begin{proof}
Let $\bb{W}_h^a(T)$ be the solution associated with $\mathcal{U}^a$, where $\mathcal{U}^a$ is the approximation of $\mathcal{U}$. According to \eqref{eq:recovery}, one has
\[\bm{z}(T) = \e^{p_k}(\bra{k}\otimes \bra{0} \otimes I)\bm{W}(T), \qquad
\bm{z}_h^a(T) = \e^{p_k}(\bra{k}\otimes \bra{0} \otimes I)\bm{W}^a_h(T)\]
for some $k \in I_{\Diamond}$, where $\bm{W}(t) = \sum_{ki} w_i(t,p_k) \ket{k,i}$. Here, we can choose $p_k = \mathcal{O}(1)$.  This gives
\begin{equation}\label{errdiff}
\|\bm{z}(T) - \bm{z}_h^a(T)\| \le \e^{p_k} \| \bb{W}(T) - \bb{W}_h^a(T) \|, \qquad  k \in I_{\Diamond}.
\end{equation}
Accordingly, we can define the linear quantity and its approximation by
\[\Upsilon = \bm{c}^\top \bm{u}(T) = \bm{c}_S^\top \bm{z}(T), \qquad
 \Upsilon_h^a = \bm{c}^\top \bm{u}_h^a(T) = \bm{c}_S^\top \bm{z}_h^a(T),\]
with the error given by
\[|\Upsilon-\Upsilon_h^a| = |\bm{c}_S^\top (\bm{z}(T)-\bm{z}_h^a(T)) | \le \|\bm{c}_S\| \|\bm{z}(T)-\bm{z}_h^a(T)\| \le \varepsilon, \]
where $\varepsilon$ is the desired error bound for the computation of the linear quantity.
Then, we need to bound the error between $\bb{z}(T)$ and $\bb{z}_h^a(T)$ such that
\[\|\bm{z}(T)-\bm{z}_h^a(T)\| \le \frac{\varepsilon}{\|\bm{c}_S\|}.\]
According to \eqref{errdiff} and Lemma \ref{lem:U_sim}, we can require
\[\| \bb{W}(T) - \bb{W}_h^a(T) \| \le \mu_{\max}^{1/2} ( \delta + \epsilon) \|\bm{z}_f(0)\| \le \e^{-p_k} \frac{\varepsilon}{\|\bm{c}_S\|} \lesssim  \frac{\varepsilon}{\|\bm{c}_S\|}.\]

When $r \simeq \log(1/\epsilon)$, there holds
\[\mu_{\max} \simeq  \|\psi^{(r)}\|_{L^2((-L,R))}^{1/r} \lesssim r \simeq \log\frac{1}{\epsilon}. \]
For this reason, we let
\[\Big(\log\frac{1}{\epsilon}\Big)^{1/2} \epsilon \|\bm{z}_f(0)\| \simeq \frac{\varepsilon}{2\|\bm{c}_S\|}, \qquad
\Big(\log\frac{1}{\epsilon}\Big)^{1/2}  \delta \|\bm{z}_f(0)\| \simeq \frac{\varepsilon}{2\|\bm{c}_S\|}.\]
From the first equation, 
\[\epsilon \simeq \frac{\epsilon'}{(\log(1/\epsilon'))^{1/2}}, \qquad \epsilon' =\frac{\varepsilon\|\bm{z}_f(0)\|}{\|\bm{c}_S\|}. \]
This yields
\[\mu_{\max}
 \lesssim \log \frac{(\log(1/\epsilon'))^{1/2}} {\epsilon'}
\simeq \log \frac{\|\bm{c}_S\|}{\varepsilon \|\bm{z}_f(0)\|}, \]
\[\frac{1}{\delta} \simeq \frac{\|\bm{c}_S\|}{\varepsilon \|\bm{z}_f(0)\|} \log \frac{\|\bm{c}_S\|}{\varepsilon \|\bm{z}_f(0)\|}.\]
Plugging the above quantities into \eqref{complexityU} gives
\[
\mathcal{O} \Big(\alpha_f T  \log \frac{\|\bm{z}_f(0)\|}{\delta\,\|\bm{z}(T)\|}\Big)
= \mathcal{O} \Big(\alpha_{A_S} T  \log \frac{\|\bm{c}_S\|}{\varepsilon \|\bm{z}(T)\|}\Big)
= \mathcal{O} \Big(\alpha_{A_S} T  \log \frac{\|\bm{c}_S\|}{\varepsilon \|\bm{z}(T)\|}\Big).\]
Since the error for the finite difference method is $\mathcal{O}(d h_L^2)$, we can deduce that $L = \mathcal{O}(\log(d/\varepsilon))$, yielding
\[\alpha_{A_S}=\mathcal{O}(d^2(L+1)) = \text{poly}(d) \log (1/\varepsilon).\]
According to  Theorem \ref{thm:ODESS} and $\lambda(BA) = \Theta(\text{poly}(d))$ from \eqref{eq:eig BA}, one has
\[ T =\Theta\Big(\frac{1}{\lambda_{\min}(BA)}\log\frac{1}{\varepsilon}\Big) =\Theta(\text{poly}\log\frac{1}{\varepsilon}).\]
The proof is finished by multiplying the repeated times $g$ shown in \eqref{eq:g_total_new}.
\end{proof}
    
}

\section{Conclusions}\label{sec:con}

In this paper, we have developed a quantum preconditioning framework for solving linear systems { arising from finite difference discretizations of the Poisson equation}, combining the Schr\"odingerization technique with the BPX multilevel preconditioner. The core idea is to construct a structure-aware block-encoding of the symmetrized preconditioned matrix $A_S = S^\top A S$ via a novel commuting identity $C_L S_\ell = T_{\ell \to L} D_\ell$. This construction avoids the unfavorable normalization scaling that would otherwise arise from naive multiplication of separate block-encodings, yielding an exact block-encoding of $A_S$ with normalization $\mathcal{O}(d^2(L+1))$, where $d$ is the spatial dimension and $L$ is the number of levels. Combined with Schr\"odingerization-based Hamiltonian simulation, the overall quantum algorithm achieves a query complexity of $\mathcal{O}\big(\mathrm{poly}(d)\,\varepsilon^{-1}\,\mathrm{polylog}(\varepsilon^{-1})\big)$ for estimating linear quantities of the solution to a given tolerance.

{ The proposed construction may be extended to finite element discretizations by appropriately modifying the operator $C_L$. Moreover, our treatment of the commuting identity offers a simplified alternative to the discontinuous Galerkin-based approach in \cite{DP24}, potentially streamlining the quantum realization of preconditioned systems in broader contexts. A detailed investigation of these extensions is left for future work.
This enhancement broadens the applicability of the block-encoding framework to hierarchical elliptic/fractional Laplacian systems via the BPX preconditioner \cite{BNWX23} and paves the way for future extensions, such as integrating Nodal Auxiliary Space Preconditioning \cite{HX07} for H(curl)-conforming electromagnetic systems, further demonstrating the versatility of Schr\"odingerization-based preconditioning for complex PDEs.}

\section*{Acknowledgments}
SJ and NL were supported by NSFC grant No. 12341104,  the Shanghai Pilot Program for Basic Research, the Shanghai Jiao Tong University 2030 Initiative and the Fundamental Research Funds for the Central Universities. SJ was also partially supported by the NSFC grant No. 92270001.
NL also acknowledges funding from the Science and Technology Program of Shanghai, China (21JC1402900), the Science and Technology Commission of Shanghai Municipality (STCSM) grant no. 24LZ1401200 (21JC1402900) and NSFC grant No.12471411.
SJ, NL and CM were supported by the  Science and Technology Innovation Key R\&D Program of Chongqing grant No. CSTB2024TIAD-STX0035.
CM was partially supported by NSFC grant No. 12501607, the Science and Technology Commission of Shanghai Municipality (No.22DZ2229014),
China Postdoctoral Science Foundation (No. 2023M732248) and Postdoctoral Innovative Talents
Support Program (No. BX20230219).
YY was supported by NSFC grant (No.\ 12301561), the Key Project of Scientific Research Project of Hunan Provincial Department of Education (No.\ 24A0100), the Science and Technology Innovation Program of Hunan Province (No.\ 2025RC3150) and the general program of Hunan Provincial Natural Science Foundation (No.\ 2026JJ50003).
YY was supported in part by the 111 Project (No.\ D23017), and Program for Science and Technology Innovative Research Team in Higher Educational Institutions of Hunan Province of China.

\newpage
\bibliographystyle{unsrt} 
\bibliography{references}

\end{document}